\newtheorem{thm}{Theorem}[section]
\newtheorem{lem}[thm]{Lemma}
\newtheorem{cor}[thm]{Corollary}
\newtheorem{pro}[thm]{Proposition}
\newtheorem{rmk}[thm]{Remark}
\numberwithin{equation}{section}
\theoremstyle{definition}
    \newtheorem{defi}[thm]{Definition}
    \newtheorem{ex}[thm]{Example}
\newcommand{\gl}{\mathfrak {gl}}
\newcommand{\ad}{\mathsf{ad}}
\def\c{\cdot}
\def\o{\otimes}
\begin{document}

\title{Extensions and crossed modules of $n$-Lie Rinehart algebras}

\author{\normalsize \bf  A. Ben Hassine \small{$^{1,2}$}\footnote {  E-mail: benhassine.abdelkader@yahoo.fr}
, T. Chtioui\small{$^{2}$}\footnote {  E-mail: chtioui.taoufik@yahoo.fr}
, M.  Elhamdadi\small{$^{3}$}\footnote { Corresponding author,  E-mail: emohamed@math.usf.edu}
 \ \ \  and\ \ \   S. Mabrouk\small{$^{4}$}\footnote { E-mail: mabrouksami00@yahoo.fr}}
\date{{\small{$^{1}$ Department of Mathematics, College  of Science and Arts at
Belqarn, P. O. Box 60, Sabt Al-Alaya 61985, University of Bisha, Saudi Arabia \\  \small{$^{2}$    Faculty of Sciences, University of Sfax,   BP
1171, 3000 Sfax, Tunisia \\ \small{$^{3}$} Department of Mathematics,
University of South Florida, Tampa, FL 33620, U.S.A.\\ \small{$^{4}$} Faculty of Sciences, University of Gafsa,   BP
2100, Gafsa, Tunisia
 }}}} \maketitle
\date{}
\date{}

\maketitle

\abstract{
We introduce a notion of $n$-Lie Rinehart algebras as a generalization of Lie Rinehart algebras to $n$-ary case. This notion is  also an algebraic analogue   of   $n$-Lie algebroids. We develop  representation theory and  describe a cohomology complex  of  $n$-Lie Rinehart algebras. Furthermore, we investigate extension theory of $n$-Lie Rinehart algebras by means of $2$-cocycles.  Finally, we introduce crossed modules of $n$-Lie Rinehart algebras to gain a better understanding of their third dimensional cohomology groups.
}

\

\noindent\textbf{Keywords:} $n$-Lie Rinehart algebras, $n$-Lie algebroids,  representations, cohomology, Extensions, crossed modules.\\
\noindent{\textbf{MSC(2010):}} 17A42, 17A30; 53D17, 17A32.
\tableofcontents
\section{Introduction}
In 1985, Filippov \cite{Filippov:nLie} introduced the concept of $n$-Lie algebras and classified the $(n+1)$-
dimensional $n$-Lie algebras over an algebraically closed field of characteristic zero. The
structure of $n$-Lie algebras is very different from that of Lie algebras due to the $n$-ary
multilinear operations involved. On the other hand, in 1973,  Nambu introduced an $n$-ary generalization of
Hamiltonian dynamics by means of $n$-ary Poisson bracket (\cite{Nambu:GenHD}). Apparently, Nambu was motivated by some problems of quark dynamics. In  \cite{Takhtajan}, Takhtajan  developed the foundations of the
theory of $n$-Poisson or Nambu–Poisson manifolds.  Over the years, many of the structural results in the theory of Lie algebras have  %their analogues in the theory
 been generalized to the context of n-Lie algebras, although it seems that the number of $n$-Lie algebras
%become more and more rare 
decreases as $n$ increases and this is due to the fact that the $n$-Jacobi
identity imposes %more and more 
strong conditions as $n$ increases. For more results %applications of the 
on n-Lie algebras the reader  can consult %be found in
 \cite{Ho&Chebotar&Ke,Papadopoulos,Alekseevsky&Guha,Gautheron,Michor&Vinogradov,Marmo&Vilasi&Vinogradov,Nakanishi}.  For the
construction, realization and classifications of $3$-Lie algebras and $n$-Lie algebras see  \cite{Arnlind&Makhlouf&Silvestrov,Bai&Bai&Wang,Bai&Song&Zhang} . In particular, representation theory of $n$-Lie algebras was first introduced by Kasymov in \cite{Kasymov}. The
adjoint representation is defined by the ternary bracket in which two elements are fixed.
To every Numbu algebra, a Leibiz algebra was canonically associated in \cite{Dal_Takh}.  Thus 
%Through fundamental objects 
one may represent a $3$-Lie algebra and more generally
an $n$-Lie algebra by a Leibniz algebra \cite{Dal_Takh}. Following this approach, deformations of $3$-Lie
algebras and $n$-Lie algebras were studied in \cite{Figueroa,Makhlouf2016,Takhtajan1995}.  The author of \cite{Mikolaj}
defined a graded Lie algebra structure on the cochain complex of a $n$-Leibniz algebra and
described a $n$-Leibniz structure as a canonical structure.

Extension theory is an important tool of constructing a larger algebra from a given one, and there has been
many types of extensions such as double extensions and Kac–Moody extensions. In
1997, Bordemann \cite{Bordemann} introduced a notion of $T^\ast$-extensions of Lie algebras and
proved that every nilpotent finite-dimensional algebra over an algebraically closed
field carrying a nondegenerate invariant symmetric bilinear form is a suitable $T^\ast$-
extension. The method of $T^\ast$-extension was used in \cite{Bajo&Benayadi&Medina,Medina&Revoy} and generalized to other types of algebras recently in \cite{Bai&Li,Lin&Wang&Deng,Liu&Zhang,Liu&Zhang1}.

%Crossed modules are algebraic structures, their first goal is to represent  third cohomology classes. They are related to the question of existence of an extension for a given outer action. The notion of crossed modules is developed in many algebras

Crossed modules for many algebraic structures, such as groups, Lie algebras, Hom-Lie algebras, Leibniz $n$-algebra  and Lie-Rinehart algebras, are classified by the third dimensional cohomology group.  For more details see  (\cite{Casas&Ladra&Pirashvili,Casas&Khmaladze&Ladra,Casas&Xabier,Das}).

The notion of Lie Rinehart algebra plays an important role in many branches of mathematics. The idea of this notion goes back to the work of Jacobson on studying certain field
extensions. It also appeared with different names in several areas of mathematics such as differential geometry and differential Galois theory, see for example \cite{Higgins&Mackenzie}.   
%Mackenzie provided a list of 14 different terms mentioned for this notion \cite{Higgins&Mackenzie}. 
Huebschmann viewed Lie Rinehart algebras
as algebraic counterpart of Lie algebroids defined over smoothmanifolds. His work
on several aspects of these algebras has been developed systematically through a series of
articles namely \cite{Huebschmann3,Huebschmann,Huebschmann1}. Lie Rinehart structures have been the subject of extensive studies,
in relations to symplectic geometry, Poisson structures, Lie groupoids and algebroids and
other type of quantizations \cite{Herz,Liu&Sheng&Bai&Chen,Liu&Sheng&Bai,Liu&Sheng&Bai1,Sheng&Zhu,Sheng2012,Mackenzie1987,Mackenzie1995,Mishra&Mukherjee&Naolekar}. Lie Rinehart algebras have been investigated further in many papers \cite{Hassine&Chtioui&Elhamdadi&Mabrouk,Bkouche,Casas,Casas&Ladra&Pirashvili,Casas&Ladra&Pirashvili1,Chen&Liu&Zhong,Huebschmann1,Krahmer&Rovi}. Some generalizations of Lie Rinehart algebras, such as Lie Rinehart superalgebras \cite{Chemla} or restricted Lie Rinehart algebras \cite{Dokas} or $3$-Lie Rinehart superalgebras \cite{BenHassine&Chtioui&Mabrouk&Silvestrov}  have been recently studied.

This paper is organized as follows:  In Section~\ref{AKMS:n-Lie}, we recall some basic definitions of $n$-Lie algebras and $n$-Lie algebroids.   Section~\ref{n-LieR} is devoted to introducing the notion of $n$-Lie Rinehart algebras which is a generalization of Lie Rinehart algebras and giving some construction results. In Section~\ref{representation-n-Lie} we investigate the notions of a representation of $n$-Lie Rinehart algebras and a cohomology of $n$-Lie Rinehart algebras with coefficients in a module.  
% is given in Section~\ref{representation-n-Lie} and we subsequently introduce a cochain complex and cohomology of a $n$-Lie Rinehart algebras with coefficients in a module.
%Furthermore, we define t
Section~\ref{Extensions} deals with extension theory of $n$-Lie Rinehart algebras by means of $2$-cocycles. Finally, Section \ref{Crossed} defines a notion of crossed modules for  $n$-Lie Rinehart algebras and gives a relation to the third dimensional cohomology group.

In this paper, all vector spaces are over a field $\mathbb{K}$ of characteristic $0$.

%%%%%%%%%%%%%%%%%%%%%%%%%%%%%%%%%%%%%%%%%%
\section{Preliminaries}\label{AKMS:n-Lie}
%%%%%%%%%%%%%%%%%%%%%%%%%%%%%%%%%%%%%%%%%

In this section, we recall some basic definitions and results of $n$-Lie algebras based on \cite{Filippov:nLie,Kasymov}. Moreover we give a (modified) definition of $n$-Lie algebroids which is already given in \cite{GM}.

\begin{defi}
   An $n$-Lie algebra is a vector space $L$ equipped with a bracket operation $[\cdot,\cdots,\cdot]:\wedge^n L\longrightarrow L $  such that for all $x_1,\cdots,x_{n-1}$, $y_1,\cdots, y_n\in L $, we have :
 \begin{align}\label{Fundamental.identity}
[x_1,\cdots,x_{n-1},[y_1,\cdots,y_n] ] &=\sum_{i=1}^n[y_1,\cdots,y_{i-1},
     [x_1,\cdots,x_{n-1},y_i] ,y_{i+1},\cdots,y_n].
\end{align}
 \end{defi}

 The elements in $\wedge^{n-1}L$ are called fundamental elements.
 On $\wedge^{n-1}L$, one can define a new bracket operation $[\cdot,\cdot]_F$ by
\begin{equation}\label{bracket_F}
  [X,Y]_F=\sum_{i=1}^{n-1}y_1\wedge\cdots\wedge y_{i-1}\wedge[x_1,\cdots,x_{n-1},y_i]_L \wedge y_{i+1}
  \wedge\cdots\wedge y_{n-1},
\end{equation}
for all $X=x_1\wedge\cdots \wedge x_{n-1}$ and $Y=y_1\wedge\cdots\wedge y_{n-1}$.
It is proved in \cite{Takhtajan:cohomology} that $(\wedge^{n-1}L ,[\cdot,\cdot]_F)$ is a Leibniz algebra.

\begin{defi}
  A morphism  of $n$-Lie algebras $f:(L ,[\cdot,\cdots,\cdot]_{L })\longrightarrow
   (L',[\cdot,\cdots,\cdot]_{L'})$ is a linear map $f:L \longrightarrow L'$ such that
  \begin{eqnarray}
f[x_1,\cdots,x_n]_{L }&=&[f(x_1),\cdots,f(x_n)]_{L'},\hspace{3mm}\forall x_1,\cdots,x_n\in L.   \end{eqnarray}
\end{defi}

\begin{defi}\label{defi:rep}
  A  representation  of an $n$-Lie algebra $(L,[\cdot,\cdots,\cdot] )$ on a vector space $M$  is a linear map $\rho:\wedge^{n-1}L \longrightarrow\gl(M)$
   such that for all  $x_1,\cdots,x_{n-1},y_1,\cdots,y_n\in L$, we have
 \begin{align}
& \rho(x_1,\cdots,x_{n-1})\circ\rho(y_1,\cdots,y_{n-1})-\rho(y_1,\cdots,y_{n-1})\circ \rho(x_1,\cdots,x_{n-1})\nonumber\\ \label{rep1}
=&\displaystyle\sum_{i=1}^{n-1}\rho(y_1,y_2,\cdots,[x_1,\cdots,x_{n-1},y_i],y_{i+1},\cdots,y_{n-1});\\
        & \rho(x_1,\cdots,x_{n-2},[y_1,\cdots,y_n])\nonumber\\ \label{rep2}
         =&\sum_{i=1}^n(-1)^{n-i}\rho(y_1,\cdots,\widehat{y_i},\cdots,y_n)\circ\rho(x_1,\cdots,x_{n-2},y_i).
\end{align}
\end{defi}
We denote such a representation by $(M,\rho)$.

\begin{ex}
Define $\ad:\wedge^{n-1}L\longrightarrow\gl(L)$ by
\begin{equation}
  \ad(X)y=[x_1,\cdots,x_{n-1},y],\quad\forall X=x_1\wedge\cdots \wedge x_{n-1}\in\wedge^{n-1}L,~y\in L.
\end{equation}
Then $(L,\ad)$ is a representation of the $n$-Lie algebra $(L,[\cdot,\cdots,\cdot])$ on $L$,
 which is called the adjoint representation.

\end{ex}

\begin{pro}\label{tri}
Let  $(M,\rho)$ be a representation of an $n$-Lie algebra $(L,[\cdot,\cdots,\cdot])$.
Define a bracket operation $[\cdot,\cdots,\cdot]_{\rho}:\wedge^n(L\oplus M)\longrightarrow L\oplus M$ by
$$[x_1+m_1,\ldots,x_n+m_n]_{\rho}=[x_1,\ldots,x_n]+\sum_{i=1}^{n}(-1)^{n-i}\rho(x_1,\ldots,\widehat{x}_i,\ldots,x_n)m_i,\,\,\,\,\forall x_i\in L,m_i\in M.$$
Then $(L\oplus M,[\cdot,\cdots,\cdot]_{\rho})$ is an $n$-Lie algebra, which we call the { semi-direct product} of the  $n$-Lie algebra
$(L,[\cdot,\cdots,\cdot])$ by the representation $(M,\rho)$. We denote this semi-direct $n$-Lie algebra simply by $L\ltimes M$.
\end{pro}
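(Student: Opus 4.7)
The plan is to verify the two conditions that promote the bracket $[\cdot,\cdots,\cdot]_{\rho}$ on $L\oplus M$ to an $n$-Lie structure: the alternating property (multilinearity being obvious), and the fundamental identity (\ref{Fundamental.identity}). Alternating symmetry splits along the direct sum: the $L$-component inherits it from $[\cdot,\cdots,\cdot]$, while in the $M$-component, swapping two entries $x_i+m_i$ and $x_j+m_j$ exchanges the corresponding terms $\rho(x_1,\ldots,\widehat{x_i},\ldots)m_i$ and $\rho(x_1,\ldots,\widehat{x_j},\ldots)m_j$; the fact that $\rho$ factors through $\wedge^{n-1}L$ together with the sign $(-1)^{n-i}$ supplies the missing minus. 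This is a routine sign check that I would not belabor.

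For the fundamental identity, I would substitute $X_i = x_i+m_i$ and $Y_j=y_j+n_j$ into both sides of (\ref{Fundamental.identity}) applied to $[\cdot,\cdots,\cdot]_{\rho}$ and project onto the two summands. The projection to $L$ is exactly (\ref{Fundamental.identity}) for $(L,[\cdot,\cdots,\cdot])$, which holds by assumption. The projection to $M$ is where the representation axioms enter. I would group the $M$-terms by which factor contributes its $M$-part. The terms carrying an $n_j$ (coming from the inner bracket slot on the LHS, or from a surviving $y_j$-slot on the RHS) collapse via axiom (\ref{rep2}): the LHS contribution $\rho(x_1,\ldots,x_{n-1})\big(\sum_j(-1)^{n-j}\rho(y_1,\ldots,\widehat{y_j},\ldots,y_n)n_j\big)$ matches the corresponding RHS contribution through (\ref{rep2}) with one of the $y_i$'s replaced by $x_{n-1}$, after reorganizing. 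The terms carrying an $m_k$ produce a commutator $\rho(x_1,\ldots,x_{n-1})\rho(y_1,\ldots,y_{n-1}) - \rho(y_1,\ldots,y_{n-1})\rho(x_1,\ldots,x_{n-1})$ acting on $m_k$ (when the $m_k$ sits in the outer bracket of the LHS and is transported through the RHS), balanced against the terms in which the inner bracket of the RHS contributes its own $M$-part $\rho(x_1,\ldots,\widehat{x_k},\ldots,x_{n-1},y_i)m_k$; these two families cancel exactly by axiom (\ref{rep1}).

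The hard part will be the bookkeeping of signs and hatted indices: the factors $(-1)^{n-i}$ in the definition of $[\cdot,\cdots,\cdot]_{\rho}$, the $(-1)^{n-i}$ in (\ref{rep2}), and the shifts in the position of the hat $\widehat{(\cdot)}$ when the inner bracket's $M$-component is reinserted into an outer $\rho(\ldots)$ must all align. Once the terms are carefully organized by which $m_k$ or $n_j$ they carry and by which slot has been contracted, the two representation axioms (\ref{rep1}) and (\ref{rep2}) match the two types of $M$-terms verbatim, yielding the identity. No conceptual difficulty is expected beyond this combinatorial verification.
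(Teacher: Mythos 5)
The paper itself gives no proof of Proposition \ref{tri} --- it is stated as a known construction --- so your proposal can only be judged on its own terms. Your overall plan (check skew-symmetry, then split the fundamental identity into its $L$- and $M$-components and sort the $M$-terms by which $m_k$ or $n_j$ they carry) is the right and standard one, and your skew-symmetry sign check is fine. But the way you assign the two representation axioms to the two families of $M$-terms is exactly backwards, and the structural description of one family is wrong. The terms carrying an $m_k$ do \emph{not} produce a commutator: on the left-hand side the only $m_k$-term is $(-1)^{n-k}\rho(x_1,\ldots,\widehat{x}_k,\ldots,x_{n-1},[y_1,\ldots,y_n])m_k$ (the inner bracket contributes only its $L$-part to the slot seen by $\rho$), while on the right-hand side the $m_k$-terms are $\sum_i(-1)^{n-i}(-1)^{n-k}\rho(y_1,\ldots,\widehat{y}_i,\ldots,y_n)\rho(x_1,\ldots,\widehat{x}_k,\ldots,x_{n-1},y_i)m_k$; these match precisely by \eqref{rep2} applied to the $(n-2)$-tuple $(x_1,\ldots,\widehat{x}_k,\ldots,x_{n-1})$. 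Conversely, it is the $n_j$-family that produces the commutator: the LHS gives $(-1)^{n-j}\rho(x_1,\ldots,x_{n-1})\rho(y_1,\ldots,\widehat{y}_j,\ldots,y_n)n_j$, the diagonal RHS term ($i=j$) gives $(-1)^{n-j}\rho(y_1,\ldots,\widehat{y}_j,\ldots,y_n)\rho(x_1,\ldots,x_{n-1})n_j$, and the off-diagonal terms give $\sum_{i\neq j}(-1)^{n-j}\rho(y_1,\ldots,\widehat{y}_j,\ldots,[x_1,\ldots,x_{n-1},y_i],\ldots,y_n)n_j$; their equality is exactly \eqref{rep1} applied to the tuple $(y_1,\ldots,\widehat{y}_j,\ldots,y_n)$.

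Because your proposal explicitly defers the computation to ``bookkeeping'' while mis-stating which axiom closes which family, the argument as written would stall: hunting for a commutator among the $m_k$-terms, or trying to make \eqref{rep2} absorb the $n_j$-terms, leads nowhere. The fix is purely a relabelling --- swap the roles of \eqref{rep1} and \eqref{rep2} in your second paragraph and move the commutator claim to the $n_j$-family --- after which the verification does close as you expect. I would also tighten the phrase ``\eqref{rep2} with one of the $y_i$'s replaced by $x_{n-1}$'': what actually happens is that \eqref{rep2} is invoked with its $(n-2)$ fixed arguments equal to the $x$'s with $x_k$ omitted, not with any $y$ replaced.
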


 The notion of Filippov algebroid called also $n$-Lie algebroid is introduced in \cite{GM} as a generalization of Lie algebroids.  We recall the definition.

\begin{defi}
An $n$-Lie algebroid  is a vector bundle $\pi:A\to M$ endowed with an  $n$-Lie  algebra structure  on the space $\Gamma A$ of smooth sections of $A$ given by the bracket
%$$[~, \ldots, ~]: \Gamma A \times \stackrel{n}{\cdots} \times \Gamma A \rightarrow \Gamma A,$$
$$[~, \ldots, ~]: \underbrace{\Gamma A \times {\cdots} \times \Gamma A}_{n} \rightarrow \Gamma A,$$
 and a bundle map  $\rho: \wedge ^{n-1}A \rightarrow TM$ (the anchor map) satisfying the following conditions:\\
(a) For all $x_1, \ldots, x_{n-1}, y_1, \ldots, y_{n-1} \in \Gamma A,$
\begin{equation}\label{algebroid1}
    [\rho(x_1, \cdots , x_{n-1}), \rho(y_1 , \cdots , y_{n-1})]= \sum_{i=1}^{n-1} \rho(y_1, \cdots ,[x_1, \ldots, x_{n-1}, y_i], y_{i+1} , \cdots, y_{n-1}),
\end{equation}
\begin{equation}\label{algebroid2}
    \rho(x_1,\cdots,x_{n-2},[y_1,\cdots,y_n])
         =\sum_{i=1}^n(-1)^{n-i}\rho(y_1,\cdots,\widehat{y_i},\cdots,y_n)\circ\rho(x_1,\cdots,x_{n-2},y_i)
\end{equation}
where the bracket on the left hand side is the usual Lie bracket on vector fields.\\
(b) For all $x_1, \ldots, x_{n-1}, y\in \Gamma A$ and $f \in C^\infty(M)$,\\
$$[x_1, \ldots, x_{n-1}, fy]= f[x_1, \ldots, x_{n-1}, y] + \rho(x_1,\cdots, x_{n-1})(f)y.$$
\end{defi}
Note that in \cite{GM}, the condition \eqref{algebroid2} does not exist. In fact, they consider %an old 
a different definition of the representation of an $n$-Lie algebra.% which was changed later.

In the following section, we introduce an algebraic analogue of Filippov algebroid called $n$-Lie Rinehart algebra.

%%%%%%%%%%%%%%%%%%%%%%%%%%%%%%%%%%%%%%%%%%
\section{$n$-Lie Rinehart algebras}\label{n-LieR}
%%%%%%%%%%%%%%%%%%%%%%%%%%%%%%%%%%%%%%%
In this section, we introduce a notion of $n$-Lie Rinehart algebras which is a generalization of Lie Rinehart algebras  given in \cite{Chemla}. We recall the notion of Leibniz-Rinehart algebra and generalize the construction of Taktajan to $n$-Lie Rinehart algebra.

\begin{defi}\label{def-3Lie rinh super}
A $n$-Lie Rinehart algebra over $A$ is a tuple $(L, A, [\cdot,\cdots,\cdot],\rho)$,
where $A$ is an associative commutative algebra, $L$ is an $A-$module, $[\cdot,\cdots, \cdot]: L\times \cdots \times L \to L$ is
a  skew-symmetric $n$-linear map, and a linear map $\rho:\wedge^{n-1} L \to Der(A)$
such that the following conditions hold:
\begin{enumerate}
\item $(L, [\cdot,\cdots,\cdot])$ is an $n$-Lie algebra.
\item The map $\rho$ is a representation of $(L, [\cdot,\cdots,\cdot])$ on $A$.
\item  For all $x_1,\cdots, x_{n-1}\in L, a\in A,$
\begin{equation}\label{eq:Rinhart2}
\rho(ax_1,\cdots,x_{n-1})=a\rho(x_1,\cdots,x_{n-1}).
\end{equation}
\item The compatibility condition:
\begin{equation}\label{eq:Rinhart1}
    [x_1, \cdots, ax_n]=a[x_1, \cdots, x_n]+\rho(x_1,\cdots,x_{n-1})a x_n,
\end{equation}
for all $x_1,\cdots,x_n\in L, a\in A$.
\end{enumerate}
\end{defi}
\begin{ex}
 Let $(A, [~, \ldots, ~], \rho)$ be an $n$-Lie algebroid over a smooth manifold $M$.  This $n$-Lie algebroid provides an $n$-Lie Rinehart algebra $(\Gamma A , C^\infty(M),[\cdot,\cdots,\cdot],\rho)$.
\end{ex}

\begin{rmk}
	We have the following observations:
\begin{enumerate}
\item If $\rho=0$, then  $(L, A,[\cdot,\cdots,\cdot])$ is called  an $n$-Lie $A$-algebra.
\item If $A=\mathbb K$, then  $(L, \mathbb K,[\cdot,\cdots,\cdot])$ is an  $n$-Lie algebra.
\item If $A=L$, then  $(L, L,[\cdot,\cdots,\cdot])$ is an $n$-ary Poisson algebra.
\end{enumerate}
\end{rmk}

In the following definition, we recall the notion of Leibniz Rinehart algebra as a particular case of Hom-Leibniz Rinehart algebra introduced in \cite{Guo&Zhang&Wang}.

\begin{defi}
A Leibniz-Rinehart algebra over $A$ is a tuple $(L, A,  [\c, \c], \rho)$, where $A$ is an
associative commutative algebra, $L$ is an $A$-module, $[\c, \c] : L\times L \rightarrow L$ is a  linear map
and the linear map $\rho: L\rightarrow Der(A)$  satisfying the  following conditions.
\begin{enumerate}
    \item The pair  $(L,  [\c, \c])$ is a Leibniz algebra.
\item $\rho([x,y])=\rho(x)\rho(y)-\rho(y)\rho(x)$.
\item $\rho (ax)=a \rho(x)$.
\item Compatibility condition
\begin{eqnarray}\label{Compatibilty-Leibniz}
[x, a y] &= a [x, y] + \rho(x)(a)y.
\end{eqnarray} for all $a\in A, x, y\in L$.
\end{enumerate}
\end{defi} Let $(L, A,[\cdot,\cdots,\cdot], \rho)$   be a $n$-Lie Rinehart algebra and denote $\wedge^{n-1}L$ by $\mathcal{L}$.  Define the linear map $\widehat{\rho}:\mathcal{L}\to Der(A)$ by:
$$
\widehat{\rho}(X)=\rho(x_1,\cdots,x_{n-1}),\ \text{for~any} ~X=x_1\wedge\cdots\wedge x_{n-1}.
$$
The space $\mathcal{L}$ becomes an $A$-module via the following
action $$aX=x_1\wedge \cdots\wedge ax_i\wedge \cdots\wedge x_{n-1},\ \text{for~any} ~X=x_1\wedge\cdots\wedge x_{n-1},\ i=1,...,n-1.$$
\begin{pro}
  With the above notations, $(\mathcal L, A,[\cdot,\cdot]_F,\widehat{\rho})$ is a Leibniz-Rinehart algebra, where the bracket
is defined as in Eq. \eqref{bracket_F}.
\end{pro}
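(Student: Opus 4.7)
The plan is to verify the four axioms of a Leibniz-Rinehart algebra in order, each one being a direct consequence of the corresponding piece of structure on $(L,A,[\cdot,\cdots,\cdot],\rho)$. The first axiom, namely that $(\mathcal{L},[\cdot,\cdot]_F)$ is a Leibniz algebra, is already recorded in the excerpt as Takhtajan's result following equation~(\ref{bracket_F}), so nothing further is needed for it.

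For the second axiom $\widehat{\rho}([X,Y]_F) = \widehat{\rho}(X)\widehat{\rho}(Y) - \widehat{\rho}(Y)\widehat{\rho}(X)$, I would expand $[X,Y]_F$ via its definition~(\ref{bracket_F}) and apply $\widehat{\rho}$ term by term, which yields precisely the sum on the right-hand side of identity~(\ref{rep1}) for the representation $\rho$ on $A$. The third axiom $\widehat{\rho}(aX)=a\widehat{\rho}(X)$ is immediate: writing $aX = ax_1\wedge x_2\wedge\cdots\wedge x_{n-1}$, which is a valid representative since the $A$-module structure on $\mathcal{L}$ permits placing the scalar in any slot, the identity reduces to~(\ref{eq:Rinhart2}).

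The fourth axiom, the compatibility condition $[X,aY]_F = a[X,Y]_F + \widehat{\rho}(X)(a)\,Y$, is the substantive step. Writing $aY = ay_1\wedge y_2\wedge\cdots\wedge y_{n-1}$ and expanding $[X,aY]_F$ via~(\ref{bracket_F}), the first summand (the one that carries the scalar $a$ inside the bracket) splits by the $n$-Lie Rinehart compatibility~(\ref{eq:Rinhart1}) as
\begin{equation*}
[x_1,\ldots,x_{n-1},ay_1]\wedge y_2\wedge\cdots\wedge y_{n-1}
= a[x_1,\ldots,x_{n-1},y_1]\wedge y_2\wedge\cdots\wedge y_{n-1}
+ \widehat{\rho}(X)(a)\,y_1\wedge y_2\wedge\cdots\wedge y_{n-1},
\end{equation*}
while the remaining summands already have $a$ sitting in the first wedge factor. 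Collecting the $a$-weighted pieces and moving the scalar $a$ between wedge slots using the $A$-module structure on $\mathcal{L}$, they reassemble into $a[X,Y]_F$, and the leftover contribution is exactly $\widehat{\rho}(X)(a)\,Y$.

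The main technical delicacy will be in this last step: one must be careful that the bracket~(\ref{bracket_F}) is well defined on $\mathcal{L}$ as an $A$-module (independent of the slot into which $a$ is inserted), and track the repositioning of $a$ in each summand so that the pieces recombine cleanly. Once this bookkeeping is in place, the verification is a routine expansion of the $n$-Lie Rinehart axioms.
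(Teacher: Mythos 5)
Your proposal is correct and follows essentially the same route as the paper's own proof: the Leibniz structure is quoted from Takhtajan, the anchor conditions reduce to \eqref{rep1} and \eqref{eq:Rinhart2} respectively, and the compatibility condition is obtained by placing $a$ in the first wedge slot of $Y$, splitting the first summand of $[X,aY]_F$ via \eqref{eq:Rinhart1}, and recollecting the $a$-weighted terms. Your remark about checking that the bracket is well defined with respect to the $A$-module structure on $\mathcal L$ is a point the paper glosses over, but otherwise the two arguments coincide.
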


\begin{proof}
For all $X,Y\in \mathcal L$ and $a\in A$, using Eq.\eqref{rep1}, we have
$$\widehat{\rho}([X,Y]_F)=\widehat{\rho}(X)\widehat{\rho}(Y)-\widehat{\rho}(Y)\widehat{\rho}(X), \forall X,Y\in \mathcal L.$$
Furthermore,  we have
$$
\widehat{\rho} (aX)= \rho(ax_1,\cdots,x_{n-1})=a \rho(x_1,\cdots,x_{n-1})=a\widehat{\rho} (X).
$$
Thanks to Eqs.\eqref{bracket_F} and \eqref{eq:Rinhart1} we have
{\small\begin{align*}
    [X,aY]_F
    =& [x_1,\cdots,x_{n-1},ay_1]\wedge y_2\wedge\cdots\wedge y_{n-1}+\sum_{i=2}^{n-1}ay_1\wedge\cdots\wedge y_{i-1}\wedge[x_1,\cdots,x_{n-1},y_i] \wedge y_{i+1}\\
    =& a[x_1,\cdots,x_{n-1},y_1]\wedge y_2\wedge\cdots\wedge y_{n-1}+\rho(x_1,\cdots,x_{n-1})ay_1\wedge y_2\wedge\cdots\wedge y_{n-1}\\
    &+a\sum_{i=2}^{n-1}y_1\wedge\cdots\wedge y_{i-1}\wedge[x_1,\cdots,x_{n-1},y_i] \wedge y_{i+1}
  \wedge\cdots\wedge y_{n-1}\\
  =&a\Big([x_1,\cdots,x_{n-1},y_1]\wedge y_2\wedge\cdots\wedge y_{n-1}+\sum_{i=2}^{n-1}y_1\wedge\cdots\wedge y_{i-1}\wedge[x_1,\cdots,x_{n-1},y_i]
  \wedge\cdots\wedge y_{n-1}\Big)\\
  &+\rho(x_1,\cdots,x_{n-1})a(y_1\wedge y_2\wedge\cdots\wedge y_{n-1})\\
  =&a[X,Y]_F+\widehat{\rho}(X)aY.
\end{align*}}
Therefore, $(\mathcal L, A,[\cdot,\cdot]_F,\widehat{\rho})$ is a Leibniz-Rinehart algebra.
\end{proof}

\begin{rmk}
If the condition of Eq. \eqref{eq:Rinhart2} is not satisfied then we call $(L, A, [\cdot,\cdots,\cdot],\rho)$ a weak $n$-Lie Rinehart algebra over $A$.
\end{rmk}

\begin{defi}\label{homorphism-Rinehart}
Let $(L, A,[\cdot,\cdots,\cdot]_L, \rho)$ and $(L', A',[\cdot,\cdots,\cdot]_{L'}, \rho')$ be two $n$-Lie Rinehart algebras.
  Then an $n$-Lie Rinehart algebra homomorphism is defined as a pair of maps $(g, f)$, where
the maps $g: A \rightarrow A'$ and $f:L\rightarrow L'$ are two $\mathbb K$-algebra homomorphisms such that:
\begin{itemize}
\item[(1)] $f(ax) = g(a)f(x)$ for all $x \in L, a \in A,$
\item[(2)] $g(\rho(x_1,\dots,x_{n-1})(a)) = \rho'(f(x_1),\cdots,f(x_{n-1}))(g(a))$ for all $x_i\in L, a \in A.$
\end{itemize}

\end{defi}

The following definition introduces the notions of subalgebra, ideal and indecomposability in the context of $n$-Lie Rinehart algebras. 

\begin{defi}\label{defn:subandideal} Let $(L,A,[\cdot,\cdots,\cdot],\rho)$ be a $n$-Lie Rinehart algebra.
\begin{enumerate}
 \item If $S$ is a subalgebra of the $n$-Lie algebra $(L,A,[\cdot,\cdots,\cdot],\rho)$ satisfying $AS\subset S$,
 then   $(S, A,[\cdot,\cdots,\cdot],\rho|_{\wedge^{n-1}S})$ is an $n$-Lie Rinehart algebra, which is called a subalgebra of the $n$-Lie Rinehart algebra
 $(L,A,[\cdot,\cdots,\cdot],\rho)$.
\item If $I$ is an ideal of the $n$-Lie   algebra $(L,A,[\cdot,\cdots,\cdot],\rho)$ and satisfies
 $AI\subset I$ and $\rho(I,L,\cdots,L)(A)L\subset I$, then   $(I, A,[\cdot,\cdots,\cdot],\rho|_{\wedge^{n-1}I})$ is an $n$-Lie Rinehart algebra,
 which is called { an ideal of the $n$-Lie Rinehart algebra $(L,A,[\cdot,\cdots,\cdot],\rho)$.}
\item If a $n$-Lie Rinehart algebra $(L,A,[\cdot,\cdots,\cdot],\rho)$  cannot be decomposed into the direct sum of two nonzero ideals,
then $L$ is called an indecomposable $n$-Lie Rinehart algebra.
\end{enumerate}
\end{defi}

\begin{thm}\label{Tensor}
Let   $(L,A, [\cdot, \cdots, \cdot], \rho)$ be an $n$-Lie Rinehart algebra. Then $(A\otimes L,A,[\cdot,\cdots,\cdot]_{A\otimes L},\rho_{A\otimes L})$ is a $n$-Lie Rinehart algebra, where
\begin{align}
&[a_1\otimes x_1, \dots, a_n\otimes x_n]_{A\otimes L}= a_1\dots a_n\otimes [x_1, \dots, x_n],\label{bracket1}\\
&\rho_{A\otimes L}(a_1\otimes x_1, \dots, a_{n-1}\otimes x_{n-1})= a_1\cdots a_{n-1} \rho(x_1, \dots, x_{n-1})\label{rho},
\end{align}
for all $x_1,\cdots, x_{n}\in L$ and $a_1,\cdots, a_n\in A$. Where $A$ acts on $A\otimes L$ via
$$
b(a_1\otimes x_1)=ba_1\otimes x_1=a_1\otimes bx_1, \forall b,a_1\in A,~~x_1\in L.
$$
\end{thm}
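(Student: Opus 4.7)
The plan is to verify the four axioms of Definition \ref{def-3Lie rinh super} for the tuple $(A\otimes L,A,[\cdot,\cdots,\cdot]_{A\otimes L},\rho_{A\otimes L})$. Since both \eqref{bracket1} and \eqref{rho} are defined on simple tensors by collecting the $A$-coefficients and applying the original bracket (respectively anchor) to the $L$-entries, I would first check that these rules descend to well-defined $\mathbb{K}$-multilinear maps on the tensor product, i.e., that they respect the balancing $ba_1\otimes x_1=a_1\otimes bx_1$ declared at the end of the statement. This is where commutativity of $A$ and $A$-linearity of the original structures on $L$ are invoked.

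Next, I would establish that $(A\otimes L,[\cdot,\cdots,\cdot]_{A\otimes L})$ is an $n$-Lie algebra. Skew-symmetry is immediate from commutativity of $A$ and skew-symmetry of $[\cdot,\cdots,\cdot]$ on $L$. For the fundamental identity \eqref{Fundamental.identity}, I would expand
$$\bigl[a_1\otimes x_1,\ldots,a_{n-1}\otimes x_{n-1},[b_1\otimes y_1,\ldots,b_n\otimes y_n]_{A\otimes L}\bigr]_{A\otimes L}$$
into the scalar $a_1\cdots a_{n-1}b_1\cdots b_n$ tensored with $[x_1,\ldots,x_{n-1},[y_1,\ldots,y_n]]$, then apply the fundamental identity on $L$ and redistribute the scalars (using commutativity of $A$) to recover the corresponding sum on the right-hand side.

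Then I would check that $\rho_{A\otimes L}$ is a representation of the resulting $n$-Lie algebra on $A$. For \eqref{rep1} and \eqref{rep2}, after substituting the definition \eqref{rho} on each factor, both sides carry a common scalar coefficient formed from the $a_i$'s and $b_j$'s, and the remaining operator identity reduces to exactly the corresponding identity for $\rho$ on $A$. The derivation property $\rho(x_1,\ldots,x_{n-1})\in\Der(A)$ supplies the missing piece for \eqref{rep2}, since the anchor must commute past the scalars that appear when composing representation maps.

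Finally I would verify \eqref{eq:Rinhart2} and \eqref{eq:Rinhart1}. The $A$-linearity of the anchor \eqref{eq:Rinhart2} follows straight from the definition: $\rho_{A\otimes L}(ba_1\otimes x_1,a_2\otimes x_2,\ldots)=ba_1\cdots a_{n-1}\rho(x_1,\ldots,x_{n-1})=b\,\rho_{A\otimes L}(a_1\otimes x_1,\ldots)$. The step I expect to be the main obstacle is the compatibility condition \eqref{eq:Rinhart1}, which requires matching
$$[a_1\otimes x_1,\ldots,b(a_n\otimes x_n)]_{A\otimes L}=b[a_1\otimes x_1,\ldots,a_n\otimes x_n]_{A\otimes L}+\rho_{A\otimes L}(a_1\otimes x_1,\ldots,a_{n-1}\otimes x_{n-1})(b)(a_n\otimes x_n).$$
Here the Leibniz correction on the right is $a_1\cdots a_{n-1}\rho(x_1,\ldots,x_{n-1})(b)\,a_n\otimes x_n$, and reconciling it with the left-hand side forces us to use the balancing $b(a_n\otimes x_n)=a_n\otimes bx_n$ and then apply the compatibility \eqref{eq:Rinhart1} for $L$ inside the last slot; the derivation property of $\rho$ must then be used to re-collect the scalar factors. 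This is the step where all the hypotheses on the original $n$-Lie Rinehart structure are simultaneously in play, so I would save it for last after the algebraic bookkeeping in the previous steps is stable.
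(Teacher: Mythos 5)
Your overall route is the same as the paper's: verify the defining axioms of an $n$-Lie Rinehart algebra directly, with the compatibility condition \eqref{eq:Rinhart1} handled last by pushing $b$ into the $L$-slot of the last factor and invoking \eqref{eq:Rinhart1} for $L$; that part of your plan, together with the verification of \eqref{eq:Rinhart2}, is exactly the computation the paper writes out. However, two of the steps you sketch would fail if carried out. First, the well-definedness step: since the statement identifies $ba_1\otimes x_1$ with $a_1\otimes bx_1$, the relevant tensor product is balanced over $A$, and the bracket \eqref{bracket1} does \emph{not} respect this balancing, because the bracket of $L$ is not $A$-multilinear --- it obeys \eqref{eq:Rinhart1}, which carries an anchor correction term. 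Concretely, $[1\otimes a_1x_1,a_2\otimes x_2,\dots,a_n\otimes x_n]_{A\otimes L}$ and $[a_1\otimes x_1,\dots,a_n\otimes x_n]_{A\otimes L}$ differ by $\pm\, a_2\cdots a_n\otimes\rho(x_2,\dots,x_n)(a_1)\,x_1$, so your appeal to ``$A$-linearity of the original structures on $L$'' is not available. (The paper never addresses well-definedness at all.)

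Second, and more seriously, the representation axioms do not reduce to the corresponding identities for $\rho$ after factoring out a common scalar. Since $\rho(X)\in Der(A)$ is a derivation, the operator composition is $\big(a\rho(X)\big)\circ\big(b\rho(Y)\big)=ab\,\rho(X)\circ\rho(Y)+a\,\rho(X)(b)\,\rho(Y)$, so with $a=a_1\cdots a_{n-1}$ and $b=b_1\cdots b_{n-1}$ the left-hand side of \eqref{rep1} for $\rho_{A\otimes L}$ equals $ab\big(\rho(X)\circ\rho(Y)-\rho(Y)\circ\rho(X)\big)+a\rho(X)(b)\rho(Y)-b\rho(Y)(a)\rho(X)$, while the right-hand side is only $ab\big(\rho(X)\circ\rho(Y)-\rho(Y)\circ\rho(X)\big)$. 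The cross terms do not cancel in general; the derivation property is the \emph{source} of the obstruction here, not, as you suggest, the device that lets the anchor ``commute past the scalars.'' The paper dismisses this verification as ``obvious to check'' and only writes out \eqref{eq:Rinhart2} and the compatibility condition, so it offers no help in closing this gap; a complete argument would have to either impose hypotheses under which these cross terms vanish or weaken the notion of representation being used.
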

\begin{proof}
It is obvious to check that $(A\otimes L,[\cdot,\cdots,\cdot]_{A\otimes L})$ is an $n$- Lie algebras, $\rho_{A\otimes L}$ is a representation of $L$ and $\rho_{A\otimes L}(a_1\otimes x_1, \dots, a_{n-1}\otimes x_{n-1})\in Der(A)$. For any $b,a_1,\cdots, a_{n-1},a_n\in A$ and $x_1,\cdots,x_{n-1},x_n\in L$, we have
\begin{align*}
\rho_{A\otimes L}(b(a_1\otimes x_1), \dots, a_{n-1}\otimes x_{n-1})=&\rho_{A\otimes L}(ba_1\otimes x_1, \cdots, a_{n-1}\otimes x_{n-1})\\
=&(ba_1)\cdots a_{n-1}\rho(x_1,\cdots,x_{n-1})\\
=&ba_1\cdots a_{n-1}  \rho(x_1,\cdots,x_{n-1})\\
=&b\big(a_1\cdots a_{n-1}  \rho(x_1,\cdots,x_{n-1})\big)\\
=&b\rho_{A\otimes L}(a_1\otimes x_1, \dots, a_{n-1}\otimes x_{n-1}).
\end{align*}
To prove the compatibility condition \eqref{eq:Rinhart1}, we compute the following
  \begin{align*}
&[a_1\otimes x_1, \dots, b(a_n\otimes x_n)]_{A\otimes L}\\
=&[a_1\otimes x_1, \dots, a_n\otimes bx_n]_{A\otimes L}\\
=& a_1\cdots a_n\otimes [x_1, \dots, bx_n]\\
=&a_1\cdots a_n\otimes (b[x_1, \dots, x_n])+a_1\dots a_n\otimes \rho(x_1, \cdots, x_{n-1})bx_n\\
=&b[a_1\otimes x_1, \dots, a_n\otimes x_n]_{A\otimes L}+ a_n\otimes \rho_{A\otimes L}(a_1\otimes x_1, \dots, a_{n-1}\otimes x_{n-1})bx_n\\
=&b[a_1\otimes x_1, \dots, a_n\otimes x_n]_{A\otimes L}+ \rho_{A\otimes L}(a_1\otimes x_1, \dots, a_{n-1}\otimes x_{n-1})b(a_n\otimes x_n).
\end{align*}
Therefore, $(A\otimes L,A,[\cdot,\cdots,\cdot]_{A\otimes L},\rho_{A\otimes L})$ is a $n$-Lie Rinehart algebra.
\end{proof}
\begin{thm}
Let $(L, A,[\cdot,\cdots,\cdot],\rho)$ be an $n$-Lie Rinehart algebra and
$$E=L\oplus A=\{(x,a)\  \ |\ \ x\in L, a\in A\}.$$ Then
$(E, A, [\cdot,\cdots,\cdot]',\rho')$ is an $n$-Lie Rinehart algebra, where,
\begin{equation}\label{eq16}
 b(x_i,a_i) = (bx_i,ba_i),
 \end{equation}
 \begin{equation}\label{eq17}
 [(x_1, a_1), \cdots, (x_n, a_n)]' = \big([x_1,\cdots, x_n],\displaystyle\sum_{i=1}^n (-1)^{n-i}\rho(x_1, \cdots,\widehat{x_i},\cdots,x_n)a_i \big),
 \end{equation}
 \begin{equation}\label{eq18}
\rho' : \wedge^{n-1} E \rightarrow Der(A), \rho'\big((x_1, a_1),\cdots,(x_{n-1}, a_{n-1})\big) = \rho(x_1,\cdots, x_{n-1})
\end{equation}
for any $b, a_i\in A, x_i\in L,~~  i=1,\dots,n$.
\end{thm}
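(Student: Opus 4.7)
The plan is to verify the four axioms of Definition 3.1 in turn, exploiting the fact that the construction is essentially a semi-direct product extended by an $A$-module structure.

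First, I would note that the bracket $[\cdot,\cdots,\cdot]'$ on $E=L\oplus A$ coincides with the semi-direct product bracket of Proposition \ref{tri} applied to $L$ and its representation $(A,\rho)$. Consequently $(E,[\cdot,\cdots,\cdot]')$ is automatically an $n$-Lie algebra, so no direct verification of the Filippov identity \eqref{Fundamental.identity} is needed. This immediately takes care of axiom (1).

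Second, for axiom (2), I would observe that $\rho'$ depends only on the $L$-components of its arguments, and that the $L$-component of $[(x_1,a_1),\dots,(x_n,a_n)]'$ is exactly $[x_1,\dots,x_n]$. Therefore verifying the representation identities \eqref{rep1} and \eqref{rep2} for $\rho'$ on $E$ reduces to the same identities for $\rho$ on $L$, which hold by hypothesis. Axiom (3), namely $\rho'(b(x_1,a_1),\ldots,(x_{n-1},a_{n-1}))=b\rho'((x_1,a_1),\ldots,(x_{n-1},a_{n-1}))$, drops immediately out of \eqref{eq:Rinhart2} applied to $\rho$, since again only the $L$-component $bx_1$ enters.

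The main (and essentially only) computation is axiom (4), the compatibility condition. Here I would expand $[(x_1,a_1),\dots,b(x_n,a_n)]'=[(x_1,a_1),\dots,(bx_n,ba_n)]'$ using \eqref{eq17}, split the $L$-component via \eqref{eq:Rinhart1} as
\[
[x_1,\dots,bx_n]=b[x_1,\dots,x_n]+\rho(x_1,\dots,x_{n-1})(b)\,x_n,
\]
and split the $A$-component by pulling $b$ out of each term $(-1)^{n-i}\rho(x_1,\dots,\widehat{x_i},\dots,bx_n)a_i$ (using $A$-linearity of $\rho$ in its last slot, a consequence of skew-symmetry and \eqref{eq:Rinhart2}) and by expanding $\rho(x_1,\dots,x_{n-1})(ba_n)$ via the Leibniz rule for the derivation $\rho(x_1,\dots,x_{n-1})\in\Der(A)$. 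The Leibniz rule yields precisely the extra term $\rho(x_1,\dots,x_{n-1})(b)\,a_n$ needed to match the $A$-component of $\rho'((x_1,a_1),\dots,(x_{n-1},a_{n-1}))(b)\cdot(x_n,a_n)$, while the other terms collect into $b[(x_1,a_1),\dots,(x_n,a_n)]'$.

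The only mild obstacle is the bookkeeping in this last step: one must carefully track that the Leibniz-rule contribution in the $A$-component lines up with the derivation contribution in the $L$-component, both being proportional to $\rho(x_1,\dots,x_{n-1})(b)$. Once this is recorded, the equality $[(x_1,a_1),\dots,b(x_n,a_n)]'=b[(x_1,a_1),\dots,(x_n,a_n)]'+\rho'((x_1,a_1),\dots,(x_{n-1},a_{n-1}))(b)\cdot(x_n,a_n)$ is immediate, completing the verification.
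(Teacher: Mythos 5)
Your proposal is correct and follows essentially the same route as the paper: invoke Proposition \ref{tri} for the $n$-Lie algebra structure, observe that $\rho'$ inherits the representation and $A$-linearity properties from $\rho$ because it only sees the $L$-components, and verify the compatibility condition by splitting the $L$-component with \eqref{eq:Rinhart1} and the $A$-component with the Leibniz rule for the derivation $\rho(x_1,\dots,x_{n-1})$. No gaps to report.
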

\begin{proof}
Let  $x_i \in L, \ a_i \in A,\ i=1,\dots, n$. According to Proposition \ref{tri},  $(E,[\cdot,\dots,\cdot]')$ is a $n$-Lie algebra. Thanks to Eq \eqref{eq16}, $E$ is an $A$-module.
Since $\rho$ is a representation of $L$ on $A$ with values in $Der(A)$ then  $\rho'$ is a representation of $E$ on $A$ with values in $Der(A)$. Indeed, for any $b_1,b_2\in A$, we have
\begin{align*}
&\rho'\big((x_1, a_1),\cdots,(x_{n-1}, a_{n-1})\big)b_1b_2\\
&=\rho(x_1,\cdots,x_1)\big)b_1b_2\\
&=(\rho(x_1,\cdots,x_{n-1})b_1)b_2+b_1\rho(x_1,\cdots,x_1)\big)b_2\\
&=\Big(\rho'\big((x_1, a_1),\cdots,(x_{n-1}, a_{n-1})\big)b_1\Big)b_2+b_1\rho'\big((x_1, a_1),\cdots,(x_{n-1}, a_{n-1})\big)b_2.
\end{align*}
Then, $\rho'\big((x_1, a_1),\cdots,(x_{n-1}, a_{n-1})\big)\in Der(A)$. Moreover,
\begin{align*}
\rho'\big(b_1(x_1, a_1),\cdots,(x_{n-1}, a_{n-1})\big)&=\rho'\big((b_1x_1, b_1a_1),\cdots,(x_{n-1}, a_{n-1})\big)\\
&=\rho(b_1x_1,\cdots,(x_{n-1})\\
&=b_1\rho(x_1,\cdots,x_{n-1})\\
&=b_1\rho'\big((x_1, a_1),\cdots,(x_{n-1}, a_{n-1})\big).
\end{align*}
Then,  Eq.\eqref{eq:Rinhart2} holds.
To prove the compatibility condition \eqref{eq:Rinhart1}, we have
\begin{align*}
&[(a_1,x_1), \cdots, b_1(a_n, x_n)]'\\
=&[(a_1,x_1), \cdots, (b_1a_n, b_1x_n)]'\\
=&\big([x_1,\cdots, b_1x_n],\displaystyle\sum_{i=1}^{n-1} (-1)^{n-i}\rho(x_1, \cdots,\widehat{x_i},\cdots,b_1x_n)a_i+(-1)^{n-1}\rho(x_1,\cdots,x_{n-1})b_1a_n \big)\\
=&\big([x_1,\cdots, x_n]+\rho(x_1,\cdots,x_{n-1})b_1x_n,\displaystyle\sum_{i=1}^{n-1} (-1)^{n-i}b_1\rho(x_1, \cdots,\widehat{x_i},\cdots,x_n)a_i\\
&+b_1\rho(x_1, \cdots,x_{n-1})a_n +(\rho(x_1, \cdots,x_{n-1})b_1)a_n\big)\\
=&\big([x_1,\cdots, x_n],\displaystyle\sum_{i=1}^{n} (-1)^{n-i}b_1\rho(x_1, \cdots,\widehat{x_i},\cdots,x_n)a_i\big)\\
&+\big(\rho(x_1,\cdots,x_{n-1})b_1x_n,(\rho(x_1,\cdots,x_{n-1})b)a_n\big)\\
=&b_1[(x_1, a_1), \cdots, (x_n, a_n)]' +\rho'\big((x_1, a_1),\cdots,(x_{n-1}, a_{n-1})\big)b(x_n,a_n).
\end{align*}
Then, $(E,A,[\cdot,\cdots,\cdot]',\rho')$ is $n$-Lie Rinehart algebras.
\end{proof}
\section{Representations of $n$-Lie Rinehart algebras}\label{representation-n-Lie}

In this section, we study representation theory and cohomology of $n$-Lie Rinehart algebras. Let   $(L, A, [\cdot,\cdots,\cdot],\rho)$ be an $n$-Lie Rinehart algebra.

\begin{defi} \label{def-rep} Let $M$ be an $A$-module and  $\psi: \wedge^{n-1} L\rightarrow End(M)$ be a linear map. The pair $(M,\psi)$ is called a representation of  $(L, A, [\cdot,\cdots,\cdot],\rho)$ if the following conditions hold:
\begin{enumerate}
\item  $\psi$ is a representation of $(L, [\c, \cdots, \c])$ on $M$,
\item $\psi(a x_1, \cdots,x_{n-1})=a\psi(x_1, \cdots, x_{n-1})$, for all $a\in A$ and $x_1, \cdots,x_{n-1}\in L$,
\item $\psi(x_1, \cdots,x_{n-1})(a m)=a\psi(x_1, \cdots,x_{n-1})(m)+\rho(x_1, \cdots,x_{n-1})am$, for all $x_1,\cdots,x_{n-1}\in L$, $a\in A$ and $m\in M$.
\end{enumerate}
\end{defi}

\begin{ex}
If $M=A$, since $\rho$ is a representation of $(L, [\c, \cdots, \c])$
over $A$ and  the conditions (2) and (3) are satisfied automatically by definition of the map $\rho$, then $(A,\rho)$ is a representation of $L$ .
\end{ex}

%It is not hard to check the following proposition.
The following proposition is straightforward.
\begin{pro}
Let $(L, A,[\cdot,\cdots,\cdot],\rho)$ be a $n$-Lie Rinehart algebra. Then $(M, \psi)$ is a representation over $(L, A,[\cdot,\cdots,\cdot],\rho)$  if and only if $(L\oplus M, A, [\cdot,\cdots,\cdot]_{L\oplus M},\rho_{L\oplus M})$ is a $n$-Lie Rinehart algebra where:
\begin{align*}
&a(x_1+m_1)=ax_1+am_1,\\
&[x_1+m_1,\cdots,x_n+m_n]_{L\oplus M}=[x_1, \cdots, x_n]+\displaystyle\sum_{i=1}^n(-1)^{n-i}\psi(x_1, \cdots,\widehat{x_i},\cdots,x_n)m_i,\\
%\rho_{L\oplus M}: \otimes^{n-1}(L\oplus M)&\longrightarrow Der(A) \\
& \rho_{L\oplus M} (x_1+m_1,\cdots, x_{n-1}+m_{n-1})=\rho(x_1, \cdots,x_{n-1}),
\end{align*}

for any $x_1, \cdots, x_n\in L$, $m_1,\cdots, m_n\in M$ and $a\in A.$\\
\end{pro}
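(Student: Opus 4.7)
The plan is to check that each axiom of an $n$-Lie Rinehart algebra structure on $L\oplus M$ corresponds to exactly one axiom of a representation $(M,\psi)$ in Definition \ref{def-rep}. Since the correspondences are two-way, this gives the claimed equivalence. The $A$-module structure on $L\oplus M$ is automatic from the direct-sum formula $a(x+m)=ax+am$.

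The first axiom, that $(L\oplus M,[\cdot,\cdots,\cdot]_{L\oplus M})$ is an $n$-Lie algebra, is by Proposition \ref{tri} equivalent to $\psi$ being a representation of the $n$-Lie algebra $(L,[\cdot,\cdots,\cdot])$ on $M$, which is condition (1) of Definition \ref{def-rep}. The two other conditions required on $\rho_{L\oplus M}$ — that it is a representation of the bigger $n$-Lie algebra $L\oplus M$ on $A$ with values in $Der(A)$, and that it satisfies the $A$-linearity axiom \eqref{eq:Rinhart2} — collapse to the same statements for $\rho$ on $L$, since $\rho_{L\oplus M}$ is $\rho$ composed with the projection $L\oplus M\to L$ and the bracket on $L\oplus M$ has $L$-part $[x_1,\cdots,x_n]$; these are assumed.

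The substantive step is verifying the Rinehart compatibility \eqref{eq:Rinhart1} for the bracket on $L\oplus M$. Expanding
\[
[x_1+m_1,\cdots,x_{n-1}+m_{n-1},a(x_n+m_n)]_{L\oplus M}
\]
and splitting into $L$- and $M$-components, the $L$-component $[x_1,\cdots,ax_n]$ equals $a[x_1,\cdots,x_n]+\rho(x_1,\cdots,x_{n-1})a\,x_n$ by \eqref{eq:Rinhart1} for $L$. For the $M$-component, the summands with $i<n$ each read $\psi(x_1,\cdots,\widehat{x_i},\cdots,ax_n)m_i$; these reduce to $a\psi(x_1,\cdots,\widehat{x_i},\cdots,x_n)m_i$ using condition (2) of Definition \ref{def-rep} together with skew-symmetry of $\psi$ on $\wedge^{n-1}L$ (which promotes $A$-linearity in the first slot to every slot). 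The $i=n$ summand is $\psi(x_1,\cdots,x_{n-1})(am_n)$, and condition (3) of Definition \ref{def-rep} rewrites this as $a\psi(x_1,\cdots,x_{n-1})m_n+\rho(x_1,\cdots,x_{n-1})a\,m_n$. Collecting, the expansion matches
\[
a[x_1+m_1,\cdots,x_n+m_n]_{L\oplus M}+\rho_{L\oplus M}(x_1+m_1,\cdots,x_{n-1}+m_{n-1})a(x_n+m_n).
\]

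The calculation is manifestly reversible: matching $L$-parts enforces \eqref{eq:Rinhart1} for $L$, while matching $M$-parts slot by slot enforces conditions (2) and (3) of Definition \ref{def-rep}. I expect the main bookkeeping task to be tracking the signs $(-1)^{n-i}$ when isolating the $i=n$ summand from the rest, and ensuring the residual $\rho$-term produced by the mixed Leibniz rule of condition (3) lands in the $M$-component of the right-hand side rather than being absorbed into the $L$-component — this is precisely what the convention that $\rho_{L\oplus M}(\ldots)a(x_n+m_n)$ acts on both $x_n$ and $m_n$ guarantees.
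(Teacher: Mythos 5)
Your verification is correct and is exactly the routine check the paper has in mind: the paper gives no proof (it merely labels the proposition straightforward), and your axiom-by-axiom matching --- the semidirect-product construction for the $n$-Lie structure, the reduction of the anchor conditions on $L\oplus M$ to those of $\rho$ via the projection, and the slot-by-slot expansion of the compatibility condition in which the $i<n$ terms use $A$-linearity of $\psi$ (extended to all slots by skew-symmetry) and the $i=n$ term uses the Leibniz-type condition (3) --- supplies the intended details, including the correct bookkeeping of the residual $\rho(x_1,\cdots,x_{n-1})a\,(x_n+m_n)$ term. The only point worth flagging is that the semidirect-product proposition is stated in the paper as a one-way implication, so for the ``only if'' direction you are implicitly invoking its standard converse (recovering the representation identities from the fundamental identity on $L\oplus M$); this is harmless but deserves a word.
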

\begin{rmk}
For any $n$-Lie Rinehart algebra $(L, A, [\cdot, \cdots, \cdot], \rho)$, the map $\ad$ is not, in general, a representation of $L$.
\end{rmk}
 Let $K=\ker \rho=\{x\in L, \rho(x, L,\cdots,L)=0\}$. It is called the kernel of  $\rho$.  We have the following lemma.
 \begin{lem}
  The set  $K$   is an ideal of $L$.
\end{lem}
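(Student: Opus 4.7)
The plan is to verify the three conditions from Definition~\ref{defn:subandideal}(2), namely that $K$ is an ideal of the underlying $n$-Lie algebra, that $AK\subset K$, and that $\rho(K,L,\cdots,L)(A)L\subset K$. The last condition is automatic since every element of $\rho(K,L,\cdots,L)$ is zero on $A$ by the very definition of $K$, so the corresponding subset of $L$ reduces to $\{0\}\subset K$.

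For $AK\subset K$, I would fix $x\in K$ and $a\in A$ and show $ax\in K$. Using Eq.~\eqref{eq:Rinhart2} in the first slot gives $\rho(ax,y_1,\cdots,y_{n-2})=a\rho(x,y_1,\cdots,y_{n-2})=0$ for every $y_i\in L$. Since $\rho$ is defined on $\wedge^{n-1}L$, it is alternating, so vanishing in the first slot forces vanishing in every slot; hence $ax\in K$.

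The main step—and the one that requires actual work—is showing that $[x_1,x_2,\cdots,x_n]\in K$ whenever $x_1\in K$ and $x_2,\cdots,x_n\in L$. My strategy is to apply the second representation identity \eqref{rep2} to $\rho(y_1,\cdots,y_{n-2},[x_1,\cdots,x_n])$, which expands as
\[
\sum_{i=1}^{n}(-1)^{n-i}\,\rho(x_1,\cdots,\widehat{x_i},\cdots,x_n)\circ\rho(y_1,\cdots,y_{n-2},x_i).
\]
Then I would argue termwise: for $i=1$ the inner factor $\rho(y_1,\cdots,y_{n-2},x_1)$ vanishes because $x_1\in K$, while for $i\geq 2$ the outer factor $\rho(x_1,\cdots,\widehat{x_i},\cdots,x_n)$ still contains $x_1$ in its arguments and so vanishes for the same reason. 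Hence the whole sum is zero for arbitrary $y_1,\cdots,y_{n-2}\in L$, and alternation of $\rho$ then yields $[x_1,\cdots,x_n]\in K$.

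The only subtle point is the clean use of skew-symmetry of $\rho$ on $\wedge^{n-1}L$ to pass from ``$x_1$ in the first slot'' to ``$x_1$ in any slot''; once this is observed, both the $AK\subset K$ step and the bracket-stability step follow immediately from \eqref{eq:Rinhart2} and \eqref{rep2}, respectively, with no further computation needed.
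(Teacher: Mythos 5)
Your proposal is correct and follows essentially the same route as the paper: bracket-stability via the second representation identity \eqref{rep2} (with the $K$-element killing either the inner or the outer factor in each term), $AK\subset K$ via \eqref{eq:Rinhart2}, and the condition $\rho(K,L,\cdots,L)(A)L\subset K$ holding trivially because $\rho$ vanishes on $K$. Your explicit remark about using the skew-symmetry of $\rho$ on $\wedge^{n-1}L$ to move the $K$-element between slots is a point the paper uses only tacitly, but the argument is the same.
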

\begin{proof}
Thanks to  Eqs. \eqref{rep1}, \eqref{rep2},  for all   $x\in K$, $y_1,\cdots,y_{n-1},z_1,\cdots,z_{n-1}\in  L$,
\begin{align*}
     \rho(z_1,\cdots,z_{n-2},[x,y_1\cdots,y_{n-1}])&=\sum_{i=1}^{n-1}(-1)^{n-i}\rho(x,y_1,\cdots,\hat{y_i},\cdots,y_{n-1})
     \circ\rho(z_1,\cdots,z_{n-2},y_i)\\
     &=0.
\end{align*}
Therefore, $[x, L,\cdots,L]\subseteq K$. By Eq.\eqref{eq:Rinhart2}, for all $a\in  A$,
\begin{align*}
     & \rho(ax,y_1,\cdots,y_{n-1})= a\rho(x,y_1,\cdots,y_{n-1})=0.
     \end{align*}
     Then $A K\subset K$. Also we have $\rho(K,L,\cdots,L)(A)L\subset K$, indeed
     \begin{align*}
      & \rho(x,y_1,\cdots,y_{n-2})ay_{n-1}=0\in K.
\end{align*}
Therefore, $K$ is an ideal of the  $n$-Lie Rinehart algebra $(L,A,[\cdot,\cdots,\cdot],\rho)$.
\end{proof}

 \begin{lem}
 Let  $(L,A,[\cdot,\cdots,\cdot],\rho)$ be a $n$-Lie Rinehart algebra. Then $(K, \ad)$ is a representation of $L$
 \end{lem}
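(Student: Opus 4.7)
The plan is to verify, one by one, the three axioms of Definition~\ref{def-rep}, taking $M = K$ and $\psi = \ad$. Before doing so I would remark that $\ad$ really does land in $\End(K)$: for $x_1,\dots,x_{n-1}\in L$ and $m\in K$, the element $[x_1,\dots,x_{n-1},m]$ lies in $K$ because $K$ is an ideal (by the previous lemma, $[K,L,\dots,L]\subseteq K$, and here the fundamental element contains $m\in K$).

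For axiom (1), I would observe that the adjoint representation of the underlying $n$-Lie algebra already satisfies the identities \eqref{rep1}--\eqref{rep2}; these are simply reformulations of the fundamental identity \eqref{Fundamental.identity}. Restricting these identities to arguments in $K$ (which is preserved by $\ad$ by the remark above) gives (1) immediately. Axiom (3) is also essentially a direct read-off: by the compatibility condition \eqref{eq:Rinhart1},
\[
\ad(x_1,\dots,x_{n-1})(am)=[x_1,\dots,x_{n-1},am]=a[x_1,\dots,x_{n-1},m]+\rho(x_1,\dots,x_{n-1})(a)\,m,
\]
which is exactly the required equality once we rewrite it as $a\,\ad(x_1,\dots,x_{n-1})(m)+\rho(x_1,\dots,x_{n-1})(a)\,m$.

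The most delicate point is axiom (2), namely $\ad(ax_1,x_2,\dots,x_{n-1})=a\,\ad(x_1,\dots,x_{n-1})$. The bracket's compatibility identity is stated only for scaling the \emph{last} slot, but by the full skew-symmetry of $[\cdot,\dots,\cdot]$, I would move $ax_1$ to the last position at the cost of a sign $(-1)^{n-1}$, apply \eqref{eq:Rinhart1}, and then move $x_1$ back. This produces two terms; the spurious anchor term is $(-1)^{n-1}\rho(x_2,\dots,x_{n-1},m)(a)\,x_1$, which vanishes because $\rho$ is skew-symmetric in its $n-1$ arguments and $m\in K=\ker\rho$ forces $\rho(x_2,\dots,x_{n-1},m)=\pm\rho(m,x_2,\dots,x_{n-1})=0$. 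The remaining term gives $a\,[x_1,\dots,x_{n-1},m]$, as required. So the key obstacle is precisely this bookkeeping with skew-symmetry of both $[\cdot,\dots,\cdot]$ and $\rho$, and it is the membership $m\in K$ that makes the unwanted anchor term disappear.
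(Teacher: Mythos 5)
Your proof is correct and follows essentially the same route as the paper's: axioms (1) and (3) are read off directly from the fundamental identity and the compatibility condition \eqref{eq:Rinhart1}, while axiom (2) is handled by moving $ax_1$ to the last slot via skew-symmetry and killing the resulting anchor term $(-1)^{n-1}\rho(x_2,\dots,x_{n-1},m)(a)\,x_1$ using $m\in K=\ker\rho$. Your preliminary remark that $\ad$ actually lands in $\End(K)$ because $K$ is an ideal is a small point the paper leaves implicit, but the substance of the argument is identical.
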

 \begin{proof}
 It is clear that $\ad$  is a representation of the $n$-Lie algebra $(L,[\cdot,\cdots,\cdot])$ on $K$.
 For any $x_1,\cdots,x_{n-1}\in L$, $x_n\in K$ and  $a\in A$, we have
 \begin{align*}
  \ad(ax_1,\cdots,x_{n-1})x_n&=[ax_1,\cdots,x_n]\\
  &=a[x_1,\cdots,x_n]+(-1)^{n-1}\rho(x_2,\cdots,x_n)ax_1\\
  &=a[x_1,\cdots,x_n]\\
  &=a \big(\ad(x_1,\cdots,x_{n-1})x_n\big)
 \end{align*}
 and
 \begin{align*}
  \ad(x_1,\cdots,x_{n-1})(ax_n)&=[x_1,\cdots,ax_n]\\
  &=a[x_1,\cdots,x_n]+\rho(x_1,\cdots,x_{n-1})ax_n\\
  &=a \big(\ad(x_1,\cdots,x_{n-1})x_n\big)+\rho(x_1,\cdots,x_{n-1})ax_n.
 \end{align*}
 Then $(K, \ad)$ is a representation of $L$.
 \end{proof}
 Let $(L,A, [\cdot,\cdots,\cdot ], \rho)$ be an $n$-Lie Rinehart algebra and $(M,\psi)$ be a representation on $L$. Define  $M^\ast$  to be the set of $A$-linear maps $f:M\to A$,  with $A$-bilinear map $ \langle\cdot,\cdot\rangle:M^\ast\times M\to A$ given by
 $$\langle f,m\rangle=f(m),$$
 and define  $\psi^\ast:L\wedge\cdots\wedge L\to End(M^\ast)$   by
$$
\langle\psi^\ast(x_1,\cdots,x_{n-1})f,m\rangle=-\langle f,\psi(x_1,\cdots,x_{n-1})m\rangle
$$
for all $f\in M^\ast, m\in M, x_i\in L$.

\begin{thm}
 With the above notation,  the pair $(M^\ast, \psi^\ast)$ is a representation on $L$.
\end{thm}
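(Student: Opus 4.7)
The plan is to verify the three conditions of Definition~\ref{def-rep} for the pair $(M^\ast,\psi^\ast)$. The first preliminary point is to check that $\psi^\ast(x_1,\dots,x_{n-1})f$ is $A$-linear whenever $f$ is, so that $\psi^\ast$ genuinely takes values in $\End(M^\ast)$. With $X=x_1\wedge\cdots\wedge x_{n-1}$, condition (3) of Definition~\ref{def-rep} for $\psi$ gives $\psi(X)(am)=a\psi(X)m+\rho(X)(a)m$, and combining this with the derivation property of $\rho(X)$ on $A$ and the $A$-linearity of $f$, one verifies that the derivation contributions $\rho(X)(a)f(m)$ produced on the two sides of $\langle\psi^\ast(X)f,am\rangle=a\langle\psi^\ast(X)f,m\rangle$ cancel, so that $\psi^\ast(X)f\in M^\ast$.

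For condition (1), I would verify the two fundamental identities \eqref{rep1} and \eqref{rep2} for $\psi^\ast$ by pairing both sides against an arbitrary $m\in M$ and reducing to the corresponding identities for $\psi$. The key observation is that transposition reverses composition: for $X,Y\in\wedge^{n-1}L$,
$$\langle \psi^\ast(X)\psi^\ast(Y)f,m\rangle \;=\; \langle f,\psi(Y)\psi(X)m\rangle + \text{(derivation terms in $\rho$)}.$$
Subtracting the analogous identity with $X$ and $Y$ swapped makes the symmetric derivation terms cancel pairwise, and what remains is, up to sign, $\langle f,[\psi(X),\psi(Y)]m\rangle$; invoking \eqref{rep1} for $\psi$ then yields \eqref{rep1} for $\psi^\ast$ after translating back through the defining pairing.

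For condition (2), the identity $\psi^\ast(ax_1,\dots,x_{n-1})=a\psi^\ast(x_1,\dots,x_{n-1})$ follows immediately by pulling $a$ out of both $\psi(ax_1,\dots,x_{n-1})=a\psi(x_1,\dots,x_{n-1})$ and $\rho(ax_1,\dots,x_{n-1})=a\rho(x_1,\dots,x_{n-1})$ inside the pairing. For condition (3), a direct expansion of $\langle\psi^\ast(X)(af),m\rangle$ using that $\rho(X)$ is a derivation of $A$ and that $f$ is $A$-linear produces exactly the decomposition $a\langle\psi^\ast(X)f,m\rangle+\rho(X)(a)\langle f,m\rangle$, giving $\psi^\ast(X)(af)=a\psi^\ast(X)f+\rho(X)(a)f$.

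The main obstacle will be the bookkeeping in \eqref{rep2} for $\psi^\ast$: the alternating sum $\sum_i(-1)^{n-i}\rho(\dots)\circ\rho(\dots)$ appearing on the right-hand side of \eqref{rep2} for $\psi$ must, after dualization, careful tracking of the sign conventions, and correct handling of the $\widehat{y_i}$ omissions, reassemble into the corresponding sum for $\psi^\ast$ without extraneous $\rho$-derivation contributions. Once the cancellation of derivation terms established in the verification of \eqref{rep1} is understood, this step is essentially combinatorial and mirrors the $\psi$-side pattern slot by slot.
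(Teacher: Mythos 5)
Your overall strategy coincides with the paper's: transpose everything through the pairing $\langle\cdot,\cdot\rangle$ and reduce each axiom of Definition \ref{def-rep} for $\psi^\ast$ to the corresponding axiom for $\psi$. Conditions (1), (2) and (3) then go through essentially as you describe --- in fact more simply than you anticipate, since with the definition $\langle\psi^\ast(X)f,m\rangle=-\langle f,\psi(X)m\rangle$ one has exactly $\langle\psi^\ast(X)\psi^\ast(Y)f,m\rangle=\langle f,\psi(Y)\psi(X)m\rangle$, with no derivation remainder at all, so there is nothing to cancel in the verification of \eqref{rep1}, and the "bookkeeping obstacle" you foresee for \eqref{rep2} is likewise just a double sign flip in each composite.

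The genuine problem is your preliminary step. You claim the derivation contributions $\rho(X)(a)f(m)$ cancel so that $\psi^\ast(X)f$ is again $A$-linear; they do not. Condition (3) for $\psi$ together with the $A$-linearity of $f$ gives
\begin{align*}
\langle\psi^\ast(X)f,am\rangle&=-\langle f,\psi(X)(am)\rangle=-a\langle f,\psi(X)m\rangle-\rho(X)(a)\langle f,m\rangle\\
&=a\langle\psi^\ast(X)f,m\rangle-\rho(X)(a)f(m),
\end{align*}
so the term $-\rho(X)(a)f(m)$ survives: $\psi^\ast(X)f$ is a first-order differential operator on $M$, not an element of $M^\ast=\mathrm{Hom}_A(M,A)$. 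The cancellation you describe would occur for the "Lie derivative" transpose $\psi^\ast(X)f=\rho(X)\circ f-f\circ\psi(X)$, which is the standard choice for duals of Lie--Rinehart modules, but that is not the definition given here. Note that the paper's own proof is silent on this well-definedness issue and even uses $\langle\psi^\ast(X)f,am\rangle=\langle a\psi^\ast(X)f,m\rangle$ in its verification of condition (3), so the gap is inherited from the source; but if you try to carry out your preliminary check as written, it will fail.
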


 \begin{proof}
 For any $x_1,\cdots,x_{n-1},y_1,\cdots,y_{n-1}\in L$, $f\in M^\ast$, $m\in M$ and  $a\in A$   we have
 \begin{align*}
   &  \langle  \psi^\ast (x_1,\cdots,x_{n-1})\circ  \psi^\ast(y_1,\cdots,y_{n-1})f-  \psi^\ast(y_1,\cdots,y_{n-1})\circ   \psi^\ast(x_1,\cdots,x_{n-1})f,m\rangle\\
   =&\langle f,   \psi(y_1,\cdots,y_{n-1})\circ   \psi (x_1,\cdots,x_{n-1})m-  \psi (x_1,\cdots,x_{n-1})\circ  \psi (y_1,\cdots,y_{n-1})m\rangle\\
   =&-\langle f,   \psi (x_1,\cdots,x_{n-1})\circ  \psi (y_1,\cdots,y_{n-1})m-  \psi(y_1,\cdots,y_{n-1})\circ   \psi (x_1,\cdots,x_{n-1})m\rangle\\
   =&-\langle f,\displaystyle\sum_{i=1}^{n-1}  \psi(y_1,\cdots,y_{i-1},[x_1,\cdots,x_{n-1},y_i],y_{i+1},\cdots,y_{n-1})m\rangle\\
   =&\langle\displaystyle\sum_{i=1}^{n-1}  \psi^\ast(y_1,\cdots,y_{i-1},[x_1,\cdots,x_{n-1},y_i],y_{i+1},\cdots,y_{n-1})f,m\rangle.
 \end{align*}
 Then Eq. \eqref{rep1} is verified for $ \psi^\ast$. Similarly, we can check Eq.$\eqref{rep2}$.
 Therefore $(M^\ast,  \psi^\ast)$ is a representation  of the $n$-Lie algebra  $(L,  [\cdot,\cdots,\cdot])$.
Furthermore,
 \begin{align*}
     \langle\psi^*(ax_1,\cdots,x_{n-1})f,m\rangle=&-\langle f,\psi(ax_1,\cdots,x_{n-1})m\rangle\\
     =&-a\langle f, \psi(x_1,\cdots,x_{n-1})m\rangle\\
     =& a\langle \psi^*(x_1,\cdots,x_{n-1})f,m\rangle\\
     =& \langle a\psi^*(x_1,\cdots,x_{n-1})f,m\rangle,
 \end{align*}
 and

 \begin{align*}
     \langle  \psi^\ast(x_1,\cdots,x_{n-1})(af),m\rangle=&-\langle af,  \psi(x_1,\cdots,x_{n-1})m\rangle\\
     =&-\langle f,  a \psi(x_1,\cdots,x_{n-1})m\rangle\\
      =&-\langle f,   \psi(x_1,\cdots,x_{n-1})(am)\rangle+ \langle f,  \rho(x_1,\cdots,x_{n-1})am\rangle\\
      =&\langle \psi^\ast(x_1,\cdots,x_{n-1}) f,  am\rangle+ \langle \rho(x_1,\cdots,x_{n-1})af,  m\rangle\\
      =&\langle (a \psi^\ast(x_1,\cdots,x_{n-1})+\rho(x_1,\cdots,x_{n-1})a)f,  m\rangle.
 \end{align*}
 Then, $(M^\ast, \psi^\ast)$ is a representation on $L$.
 \end{proof}

The following corollary is straightforward.

\begin{cor}
 Let $(L,A, [\cdot,\cdots,\cdot ], \rho)$ be an $n$-Lie Rinehart algebra. Then $(K^\ast, \ad^\ast)$ is a representation of  $L$
where $\ad^\ast: \wedge^{n-1} L\to End(K^\ast)$ by
$$
\langle\ad^\ast(x_1,\cdots,x_{n-1})f,y\rangle=-\langle f,\ad(x_1,\cdots,x_{n-1})y\rangle.
$$
\end{cor}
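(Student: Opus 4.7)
The plan is to derive the corollary as an immediate consequence of the two preceding results: the lemma that establishes $(K,\ad)$ as a representation of $L$, together with the theorem that produces the dual representation $(M^{\ast},\psi^{\ast})$ out of any representation $(M,\psi)$.

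First I would observe that $K=\ker\rho$ is an ideal of $L$, and that the preceding lemma shows $(K,\ad)$ is a representation of the $n$-Lie Rinehart algebra $(L,A,[\cdot,\cdots,\cdot],\rho)$, in the sense of Definition \ref{def-rep}; in particular $\ad(x_1,\cdots,x_{n-1})$ sends $K$ into itself because $K$ is an ideal, so $\ad:\wedge^{n-1}L\to\End(K)$ is well-defined. Next, I would invoke the preceding theorem on dualization with $M=K$ and $\psi=\ad$. This directly yields that $(K^{\ast},\ad^{\ast})$ is a representation of $L$, where by the definition used in that theorem one has
\[
\langle\ad^{\ast}(x_1,\cdots,x_{n-1})f,y\rangle=-\langle f,\ad(x_1,\cdots,x_{n-1})y\rangle
\]
for all $f\in K^{\ast}$, $y\in K$ and $x_i\in L$, which is precisely the formula in the statement.

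The only point that deserves a line of checking is that the $A$-module structure on $K$ (inherited from $L$, since $AK\subset K$ by the lemma) is compatible with the formation of $K^{\ast}$ as $A$-linear maps $K\to A$, so that the dualization theorem really does apply. This is essentially bookkeeping: since $K$ is an $A$-submodule of $L$, $K^{\ast}=\Hom_{A}(K,A)$ is a well-defined $A$-module, and the three conditions of Definition \ref{def-rep} for $(K^{\ast},\ad^{\ast})$ follow verbatim from the proof of the dualization theorem applied to $(K,\ad)$. I do not foresee any real obstacle; the corollary is genuinely a direct specialization and the only care needed is to match the pairings, which is why the paper records the explicit formula for $\ad^{\ast}$.
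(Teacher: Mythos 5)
Your proposal is correct and matches the paper's intent exactly: the paper offers no written proof, merely labelling the corollary ``straightforward,'' because it is precisely the specialization of the dualization theorem to $(M,\psi)=(K,\ad)$ using the preceding lemma that $(K,\ad)$ is a representation. Your additional remark that $K$ is an $A$-submodule (so $K^\ast$ is well defined) is the only bookkeeping point worth recording, and you handle it correctly.
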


  \

Let  $(M,\psi)$ be a representation of the $n$-Lie Rinehart algebra  $(L, A, [\cdot,\cdots,\cdot],\rho)$ and  $C^{p}(L, M)$ the space of all linear maps   $f: \wedge^{n-1}L\o\stackrel{(p-1)}{\ldots}\o\wedge^{n-1}L \wedge L\rightarrow M $ satisfying the conditions below:
\begin{enumerate}
\item $f(x_1, \dots,x_i,x_{i+1}, \dots, x_{(n-1)(p-1)+1})=-f(x_1,  \dots,x_{i+1},x_i, \dots, x_{(n-1)(p-1)+1})$,  for  all $x_i\in L,  1\leq i \leq (n-1)(p-1)+1$,
\item $f(x_1,\c \c \c, a x_i, \c \c \c, x_{(n-1)(p-1)+1})=af(x_1,\c \c \c, x_i, \c \c \c, x_{(n-1)(p-1)+1})$,  for  all $x_i\in L,  1\leq i \leq 2n+1$  and $a\in A$.
\end{enumerate}
Next we consider the \emph{cochains} as $\mathbb{Z}_{+}$-graded space of $\mathbb{K}$-modules
\begin{eqnarray*}
C^{\ast}(L, M):=\oplus_{p\geq 0}C^{p}(L, M).
\end{eqnarray*}

Define the \emph{coboubdaries} as $\mathbb{K}$-linear maps $\delta:  C^{p}(L, M)\rightarrow C^{p+1}(L, M)$ given by
\begin{eqnarray*}
&& \delta f(X_1,\c  \c  \c, X_{p},z)\\
&=& \sum_{1\leq i<k\leq p}(-1)^i f\Big(X_1,\cdots,\hat{X}_i,\cdots,X_{k-1},[X_i,X_k]_F,X_{k+1},\cdots,X_{p},z\Big)\\
&&+\sum_{i=1}^p(-1)^i f(X_1,\cdots,\hat{X}_i,\cdots,X_{p},
\ad(X_i)(z))\\
&&+\sum_{i=1}^p(-1)^{i+1}\psi(X_i) f(X_1,\cdots,\hat{X}_i,\cdots,X_{p},z)\\
&&+\sum_{i=1}^{n-1}(-1)^{n+p-i+1}\psi(x^1_p,x^2_p,\cdots,\hat{x^i_p},\cdots,x^{n-1}_p,z)
f(X_1,\cdots,X_{p-1},x^i_p) ,
\end{eqnarray*}
for all $X_i=(x^1_i,x^2_i,\cdots,x^{n-1}_i)\in \mathcal L$ and $z\in L.$

Since the proof of the following proposition is similar to the proofs in \cite{Ammar&Mabrouk&makhlouf,BenHassine&Chtioui&Mabrouk&Silvestrov,Mikolaj}, we decided not include it.
%\begin{pro}
%Withe the above notation, we have $\delta^2=0$.
%\end{pro}
\begin{proof}
The proof of this result can be deduced from \cite{Ammar&Mabrouk&makhlouf,BenHassine&Chtioui&Mabrouk&Silvestrov,Mikolaj}
 \end{proof}

A $p$-cochain $f\in C^{p}(L,M)$ is called a { $p$-cocycle} if $\delta f=0$. A $p$-cochain $f\in C^{p}(L,M)$ is called a { $p$-coboundary} if $f=\delta g$ for some $g\in C^{p-1}(L,M)$. Denote by $\mathcal{Z}^p(L,M)$ and $\mathcal{B}^p(L,M)$ the sets of $p$-cocycles and  $p$-coboundaries respectively. We define the $p$-th cohomology group
$\mathcal{H}^p(L,M)$ to be $\mathcal{Z}^p(L,M)/\mathcal{B}^p(L,M)$.

%%%%%%%%%%%%%%%%%%%%%%%%%%%%%%%%%%%%%%%%%%%%%%%%%%%%%%%%%%%%%%%%%%
\section{Extensions of $n$-Lie  Rinehart algebras}\label{Extensions}
In this section, we deal with extensions of $n$-Lie  Rinehart algebras. \\
An algebra $\tilde{L}$ is  an extension of $n$-Lie  Rinehart algebras $L$  by $V$ if there is an exact sequence %$$0 \longrightarrow V  \xrightarrow{\ \ i\ \ } \tilde{A}  \xrightarrow{\ \ \pi\ \ } A \longrightarrow 0$$
$$0 \longrightarrow V  \xrightarrow{\ \ i\ \ } \tilde{L}  \xrightarrow{\ \ \pi\ \ } L \longrightarrow 0$$

%%%%%%%%%%%%%%%%%%%%%%%%%%%%%%%%%%
\subsection{Central extensions of $n$-Lie  Rinehart algebras}
%%%%%%%%%%%%%%%%%%%%%%%%%%%%%%%%%%
\begin{defi}
A central extension of an $n$-Lie Rinehart algebra $(L,A,[\cdot,\cdots,\cdot],\rho)$  is an extension in which the center of $\tilde{L}$ contains $V$.
\end{defi}

\begin{ex}
 Let $(L,A,[\cdot,\cdots,\cdot],\rho)$ be an $n$-Lie  Rinehart algebra, $V$ be a one dimensional $A$-vector space generated by $e$  and $\varphi:L^{\otimes n} \to A$ is a linear map.
 Define the central extension   $\tilde L=L\oplus V$ by the bracket
\begin{equation}\label{7.1}
[\tilde{x}_1,\cdots,\tilde{x}_n]_{\widetilde{L}}=[x_1,\cdots,x_n]+\varphi(x_1,\cdots,x_n)e
\end{equation}
and the representation  $\tilde \rho :\wedge^{n-1}\tilde L\to Der(A)$  is defined by
\begin{equation}\label{7.2}
 \tilde\rho(\tilde{x}_1,\cdots,\tilde{x}_{n-1})=\rho({x}_1,\cdots,{x}_{n-1}),
\end{equation}
where $A$ acts on $\tilde L$ via
$$
a(x_i+\lambda_i e)= ax_i+\lambda_i ae
$$
for all $\tilde x_i=x_i+\lambda_i e\in \tilde L$, $\lambda_i\in \mathbb K$ and $a\in A$.
Then $(\tilde L, A, [\cdot,\cdots,\cdot]_{\tilde L},\tilde \rho)$ is an $n$-Lie Rinehart algebra if and only if $\varphi$ is a $2$-cocycle.

Indeed, it is clear that $[\cdot,\cdots,\cdot]_{\tilde L}$  is $n$-linear and skew-symmetric if and only if $\varphi$ is  $n$-linear and skew-symmetric map.  Furthermore, the condition Eq.\eqref{eq:Rinhart1} holds if and only if $\varphi$ is $A$-linear map.
 In fact,
\begin{align*}
    &[\tilde x_1, \cdots, a\tilde x_n]_{\tilde L}-a[\tilde x_1, \cdots, \tilde x_n]-\tilde\rho(\tilde x_1,\cdots,\tilde x_{n-1})a \tilde x_n\\
    =&[x_1,\cdots,ax_n]+\varphi(x_1,\cdots,ax_n)e-a[x_1,\cdots,x_n]-a\varphi(x_1,\cdots,x_n)e-\rho(x_1,\cdots,x_{n-1})a x_n\\
    =&\varphi(x_1,\cdots,ax_n)e-a\varphi(x_1,\cdots,x_n)e.
\end{align*}Then, $\varphi$ is  a $2$-cochain.
  The fondamental identity  \eqref{Fundamental.identity} can be written,
for  $\tilde{x}_i=x_i+\lambda_i e\in \tilde{L}\ $, $\tilde{y}_i=y_i+\nu_i e\in \tilde{L},\ \ 1\leq i\leq n$, as
  $$
  \big[\tilde{x}_1,\cdots,\tilde{x}_{n-1},[\tilde{y}_1,\cdots,\tilde{y}_{n}]_{\tilde{L}}\big]_{\tilde{L}}~-  \sum_{i=1}^{n}\big[\tilde{y}_1,\cdots,\tilde{y}_{i-1},[\tilde{x}_1,\cdots,\tilde{x}_{n-1},y_i]_{\tilde{L}}.
 \tilde{y}_{i+1},\cdots,\tilde{y}_n\big]_{\tilde{L}}=0.
  $$
Using Eq. \eqref{7.1} and  the fact that $e$ is a central element, one gets
   \begin{align}\label{7.3}& \varphi\big(x_1,\cdots,x_{n-1},[y_1,\cdots,y_{n}]\big)-  \sum_{i=1}^{n}\varphi\big(y_1,\cdots,y_{i-1},[x_1,\cdots,x_{n-1},y_i],y_{i+1},\cdots,y_n\big)=0.
  \end{align}
  \item  The previous equation, may be written as $$\delta^2\varphi(X,Y,y_n)=0$$
  where  $X=(x_1, \cdots, x_{n-1}),\ Y=(y_1,\cdots, y_{ n-1})\in \mathcal L$.

\end{ex}
\subsection{$T_{\theta}$-extension of $n$-Lie  Rinehart algebras}

Let $(L, A,[\cdot, \cdots, \cdot], \rho)$ be an $n$-Lie Rinehart algebra and $(M ,\psi)$ be a representation over $L$.
If $\theta:\wedge^{n} L\rightarrow M$ is a $2$-cochain. Then $\theta$ is a $2$-cocycle associated with the module $(M,\psi)$ if it satisfies, for every $x_1,\cdots, x_n, y_1\cdots,y_{n-1},$
 \begin{eqnarray}\label{2-cocycle}
  &&\theta(y_{1},\cdots,y_{n-1},[x_1,\cdots,x_n])=\displaystyle\sum_{i=1}^n\theta(x_1,\cdots,[y_{1},\cdots,y_{n-1},x_i],\cdots,x_{n})\\
&&  +\displaystyle\sum_{i=1}^n(-1)^{n-i}\psi(x_1\cdots,\widehat{x_i},\cdots,x_n)\theta(y_{1},\cdots,y_{n-1},x_i)-\psi(y_{1},\cdots,y_{n-1})\theta(x_1,\cdots,x_n).\nonumber
  \end{eqnarray}

%Using \cite{Bai&Li},
We recall the following lemma from  \cite{Bai&Li2012}.
\begin{lem}\label{lem1}
Let $(L, [\cdot, \cdots, \cdot])$ be a $n$-Lie  algebra and $(M ,\psi)$ be a representation of   $L$. Let $\theta:\wedge^{n} L\rightarrow M$ and   $[\cdot,\cdots,\cdot]_\theta:\wedge^n(L\oplus M) \rightarrow L \oplus M$ given by
{\begin{align}\label{crochet-theta}
 [x_1+m_1,\cdots,x_n+m_n]_\theta=&[x_{1},\cdots,x_{n}]+\theta(x_{1},\cdots,x_{n})\\
 &+\displaystyle\sum_{i=1}^n(-1)^{n-i}\psi(x_1\cdots,\widehat{x_i},\cdots,x_n)m_i,\nonumber
 \end{align}}
 for all $x_1,\cdots,x_n\in L$ and $m_1,\cdots,m_n\in M$.
 Then $(L\oplus M,[\cdot,\cdots,\cdot]_{\theta})$ is $n$-Lie  algebra if and only if  $\theta$ is a $2$-cocycle  associated with the module $(M,\psi)$.
 \end{lem}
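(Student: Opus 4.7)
The plan is to check that $[\cdot,\cdots,\cdot]_\theta$ satisfies the axioms of an $n$-Lie algebra — skew-symmetry and the Filippov identity \eqref{Fundamental.identity} — and to show that the latter is equivalent to the cocycle condition \eqref{2-cocycle}. The first step is skew-symmetry: the $L$-bracket and $\theta$ are skew-symmetric by hypothesis, while the signed sum $\sum_i(-1)^{n-i}\psi(x_1,\cdots,\widehat{x_i},\cdots,x_n)m_i$ is the standard antisymmetrized $\psi$-term which is alternating in the inputs $\tilde{x}_i=x_i+m_i$, exactly as in the proof of Proposition \ref{tri}.

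For the Filippov identity, I would exploit $n$-linearity of the bracket to decompose $[\cdot,\cdots,\cdot]_\theta = [\cdot,\cdots,\cdot]_{\psi} + \theta$, where $[\cdot,\cdots,\cdot]_\psi$ is the semi-direct product bracket from Proposition \ref{tri} and $\theta$ is extended to $(L\oplus M)^{\otimes n}$ by projecting onto the $L$-parts. All contributions coming from $[\cdot,\cdots,\cdot]_\psi$ assemble into the Filippov identity for $L\ltimes M$, which holds by Proposition \ref{tri}. Thus the Filippov identity for $[\cdot,\cdots,\cdot]_\theta$ reduces to a single identity involving only the $\theta$- and $\psi$-corrections.

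The crucial case is when all $2n-1$ arguments in the Filippov identity lie in $L$. Substituting $\tilde{x}_i=x_i$, $\tilde{y}_j=y_j$ and expanding both sides, the $L$-components yield the Filippov identity for $(L,[\cdot,\cdots,\cdot])$ (which holds by hypothesis), while the $M$-components yield
\begin{align*}
&\theta(x_1,\cdots,x_{n-1},[y_1,\cdots,y_n]) + \psi(x_1,\cdots,x_{n-1})\theta(y_1,\cdots,y_n)\\
&=\sum_{i=1}^n\theta(y_1,\cdots,[x_1,\cdots,x_{n-1},y_i],\cdots,y_n) + \sum_{i=1}^n(-1)^{n-i}\psi(y_1,\cdots,\widehat{y_i},\cdots,y_n)\theta(x_1,\cdots,x_{n-1},y_i),
\end{align*}
which, after swapping the roles of the $x$'s and the $y$'s, is exactly \eqref{2-cocycle}. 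The remaining cases (one or more arguments taken in $M$) produce only $\psi$-terms and $L$-bracket identities; these are already encoded in \eqref{rep1}--\eqref{rep2} together with the Filippov identity of $L$, so they impose no condition on $\theta$. Conversely, if \eqref{2-cocycle} holds, reading the above chain backwards shows that the Filippov identity for $[\cdot,\cdots,\cdot]_\theta$ is satisfied in every case, so that $(L\oplus M,[\cdot,\cdots,\cdot]_\theta)$ is an $n$-Lie algebra. The main obstacle is purely bookkeeping: the signs $(-1)^{n-i}$ and the insertion positions in each of the two expansions must be tracked carefully so that the $\theta$- and $\psi$-terms line up exactly with \eqref{2-cocycle}.
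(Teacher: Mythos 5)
Your proposal is correct. Note, however, that the paper itself does not prove this lemma at all: it is merely recalled from \cite{Bai&Li2012}, so there is no in-paper argument to compare against, and your write-up effectively supplies the missing proof. Your route --- decomposing $[\cdot,\cdots,\cdot]_\theta$ as the semi-direct product bracket of Proposition \ref{tri} plus $\theta$ composed with the projection onto the $L$-components, letting the pure $\psi$-contributions cancel by Proposition \ref{tri}, and isolating the residual as a single identity in the $L$-parts --- is clean and, importantly, shows in one stroke that the residual is independent of the $M$-components of the arguments, so the ``all arguments in $L$'' case is not just the crucial case but the only case. Your identification of that residual with \eqref{2-cocycle} after exchanging the roles of the $x$'s and $y$'s checks out: moving the term $\psi(x_1,\cdots,x_{n-1})\theta(y_1,\cdots,y_n)$ across the equality recovers \eqref{2-cocycle} exactly, and the equivalence (rather than a one-way implication) is immediate because the difference of the two sides of \eqref{Fundamental.identity} for $[\cdot,\cdots,\cdot]_\theta$ equals precisely the cocycle defect. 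The only point worth stating explicitly is that the cross terms such as $[\tilde{x}_1,\cdots,\tilde{x}_{n-1},\theta(y_1,\cdots,y_n)]_\psi$ reduce to $\psi(x_1,\cdots,x_{n-1})\theta(y_1,\cdots,y_n)$ because the bracket and $\psi$ are multilinear and vanish on a zero slot; with that observed, the bookkeeping you flag as the main obstacle is routine.
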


 Now, we are able to construct new $n$-Lie Rinehart algebras by adding the module $(M,\psi)$ as follows.
\begin{thm}Let $(L, A,[\cdot, \cdots, \cdot], \rho)$ be a $n$-Lie Rinehart algebra and $(M ,\psi)$ be a module over $L$.  Then $\theta:\wedge^n (L\oplus M)\to L \oplus M$ is a $2$-cocycle if and only if $(L\oplus M, A, [\cdot,\cdots,\cdot]_{\theta},\rho_\theta)$
 is a $n$-Lie Rinehart algebra, where  $[\cdot,\cdots,\cdot]_\theta$ is defined in Eq. \eqref{crochet-theta}
 and $\rho_\theta:\wedge^{n-1}(L\oplus M)\to Der(A)$  given by
 \begin{align}
 \rho_\theta(x_1+m_1,\cdots,x_{n-1}+m_{n-1})=\rho(x_1,\dots,x_{n-1}),
 \end{align}
 and $A$  acts on $L\oplus M$ via
 \begin{equation*}
a(x_1+m_1)=ax_1+am_1,
\end{equation*}
 for all $x_1,\cdots,x_n\in L$, $m_1,\cdots,m_n\in M$ and $a\in A$.
\end{thm}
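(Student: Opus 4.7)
The plan is to reduce the biconditional to Lemma 5.2 (which already handles the underlying $n$-Lie algebra structure carried by $\theta$), and then treat the three additional Rinehart axioms separately. The work splits cleanly between ``$n$-Lie algebra'' content, which is the cocycle condition on $\theta$, and ``Rinehart'' content, which comes from the $A$-linearity and compatibility axioms.

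First I would invoke Lemma 5.2 to get that $(L\oplus M,[\cdot,\cdots,\cdot]_\theta)$ is an $n$-Lie algebra if and only if $\theta$ is a $2$-cocycle. Note that skew-symmetry and $n$-linearity of $[\cdot,\cdots,\cdot]_\theta$ reduce, via the known properties of $\psi$ and $[\cdot,\cdots,\cdot]_L$, to skew-symmetry and $A$-multilinearity of $\theta$, which are already built into the notion of a $2$-cochain. This disposes of the fundamental identity simultaneously for both directions.

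For the \emph{if} direction, assuming $\theta$ is a $2$-cocycle, I would verify three things. \textbf{(i) Representation property of $\rho_\theta$.} Since $\rho_\theta$ is constant in the $M$-arguments and equals $\rho$ on the $L$-parts, and since the $L$-component of $[\cdot,\cdots,\cdot]_\theta$ coincides with the original bracket of $L$, the two identities \eqref{rep1} and \eqref{rep2} for $\rho_\theta$ collapse to the corresponding identities for $\rho$, which hold by hypothesis. \textbf{(ii) $A$-linearity.} The equality $\rho_\theta(a(x_1+m_1),\ldots)=a\,\rho_\theta(x_1+m_1,\ldots)$ is immediate from the $A$-linearity of $\rho$. \textbf{(iii) Compatibility.} Here one expands $[x_1+m_1,\ldots,a(x_n+m_n)]_\theta$ using the definition and gathers terms via (a) the original compatibility $[x_1,\ldots,ax_n]=a[x_1,\ldots,x_n]+\rho(x_1,\ldots,x_{n-1})(a)\,x_n$, (b) the $A$-linearity of the $2$-cochain $\theta$ in the last slot, and (c) the module identity from Definition 4.1(3), namely $\psi(x_1,\ldots,x_{n-1})(am_n)=a\,\psi(x_1,\ldots,x_{n-1})m_n+\rho(x_1,\ldots,x_{n-1})(a)\,m_n$, together with the $A$-linearity of $\psi$ in the first $n-2$ arguments for the terms with $i<n$. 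After collecting terms the sum matches $a[x_1+m_1,\ldots,x_n+m_n]_\theta+\rho_\theta(x_1+m_1,\ldots,x_{n-1}+m_{n-1})(a)(x_n+m_n)$.

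For the \emph{only if} direction, assuming the tuple is an $n$-Lie Rinehart algebra, the fundamental identity for $[\cdot,\cdots,\cdot]_\theta$ holds in particular when the $M$-entries are all zero; projecting the resulting equality onto $M$ and isolating the $\theta$-contributions yields exactly the $2$-cocycle relation \eqref{2-cocycle}. (Equivalently, Lemma 5.2 applied to the $n$-Lie algebra underlying the Rinehart structure already gives this.) The main obstacle is the bookkeeping in step (iii); it is routine but must be carried out carefully because terms of three different types ($L$-valued, $\theta$-valued, and $\psi$-valued) each split into an ``$a$ outside'' piece and a ``$\rho(\cdots)(a)$'' piece, and one must see that the $\rho(\cdots)(a)$ pieces assemble correctly into $\rho_\theta(\cdots)(a)(x_n+m_n)$.
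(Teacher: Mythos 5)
Your proposal is correct and follows essentially the same route as the paper: reduce the $n$-Lie algebra content to Lemma 5.2 (the $2$-cocycle criterion for $[\cdot,\cdots,\cdot]_\theta$), then check the anchor and compatibility axioms directly, with the compatibility expansion using the original Rinehart compatibility, the $A$-linearity of the cochain $\theta$, and the module identity for $\psi$ exactly as the paper does. Your version is in fact slightly more explicit than the paper's, which dismisses the representation and $A$-linearity properties of $\rho_\theta$ as obvious.
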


\begin{proof}
Suppose that $\theta$ is a $2$-cocycle. Then for all $x_1,\cdots,x_n\in L$, $m_1,\cdots,m_n\in M$ and $a,b\in A$, it is obvious to show that
 $$\rho_\theta(x_1,\cdots,x_{n-1})\in Der(A)$$ and
 $$\rho_\theta(x_1,\cdots,ax_i,\dots,x_{n-1})=a\rho_\theta(x_1,\cdots,x_i,\dots,x_{n-1})
 $$
 Now, we will show the compatibility condition Eq.\eqref{eq:Rinhart1}.
 \begin{align*}
  &[x_1+m_1,\cdots,a(x_n+m_n)]_\theta=[x_1+m_1,\cdots,ax_n+am_n]_\theta\\
  =&[x_{1},\cdots,ax_{n}]+\theta(x_{1},\cdots,ax_{n})
 +\displaystyle\sum_{i=1}^{n-1}(-1)^{n-i}\psi(x_1\cdots,\widehat{x_i},\cdots,ax_n)m_i\\
 &+\psi(x_1\cdots,\widehat{x_i},\cdots,x_{n-1})(am_n)\\
 =&a[x_{1},\cdots,x_{n}]+\rho(x_1,\cdots,x_{n-1})ax_n+\theta(x_{1},\cdots,ax_{n})+a\displaystyle\sum_{i=1}^{n-1}(-1)^{n-i}\psi(x_1\cdots,\widehat{x_i},\cdots,x_n)m_i\\
 &+a\psi(x_1,\cdots,x_{n-1})m_n+\rho(x_1,\cdots,x_{n-1})am_n\\
 =&a\big([x_{1},\cdots,x_{n}]+\theta(x_{1},\cdots,x_{n})+\displaystyle\sum_{i=1}^{n}(-1)^{n-i}\psi(x_1\cdots,\widehat{x_i},\cdots,x_n)m_i\big)\\
 &+ \rho(x_1,\cdots,x_{n-1})a(x_n+m_n)\\
 =&a[x_1+m_1,\cdots,x_n+m_n]_\theta+\rho_\theta(x_1+m_1,\cdots,x_{n-1}+m_{n-1})a(x_n+m_n).
 \end{align*}
 Conversely, if  $(L\oplus M, A, [\cdot,\cdots,\cdot]_{\theta},\rho_\theta)$  is a $n$-Lie Rinehart algebra then  $(L\oplus M, [\cdot,\cdots,\cdot]_{\theta})$ is an $n$-Lie algebra. Using lemma \ref{lem1}, we have  $\theta$ is a $2$-cocycle  associated with the module $(M,\psi)$.
 \end{proof}

\begin{defi}
Denote the  $n$-Lie Rinehart algebra $(L\oplus M, A, [\cdot,\cdots,\cdot]_{\theta},\rho_\theta)$   by $T_{\theta}(L)$. It is called the $T_{\theta}$-extension of $(L, A, [\cdot, \cdots, \cdot], \rho)$ by the $L$-module $M$.
\end{defi}

\begin{lem}
Let $(L, A, [\cdot, \cdots, \cdot]_{L}, \rho)$ be a $n$-Lie  Rinehart algebra
and $(M ,\psi)$ be an $L$-module. For every $1$-cochain
$f:L\rightarrow M$, the  skew-symmetric $n$-map
$\theta_{f}:\wedge^n L\rightarrow M$ given by
\begin{eqnarray}
\theta_{f}(x_1,\cdots,x_n)=f([x_1,\cdots,x_n])-\displaystyle\sum_{i=1}^n(-1)^{n-i}\psi(x_1,\cdots,\widehat{x_i},\cdots,x_n)f(x_i),\end{eqnarray}
for all $x_1,\cdots,x_n\in L$, is a $2$-cocycle associated with $\psi$.
\end{lem}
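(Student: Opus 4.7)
The plan is to recognize $\theta_f$ as (minus) the coboundary of the $1$-cochain $f$, so that the $2$-cocycle condition~\eqref{2-cocycle} reduces to $\delta^2 f = 0$, the defining property of the cochain complex $(C^\ast(L,M),\delta)$ introduced in Section~\ref{representation-n-Lie}. Specializing the coboundary formula to $p=1$, with $X_1 = x_1\wedge\cdots\wedge x_{n-1}\in\mathcal L$ and $z=x_n\in L$, the first (double) sum is empty; the second sum contributes $-f([x_1,\cdots,x_n])$; the third sum contributes $\psi(x_1,\cdots,x_{n-1})f(x_n)$; and the fourth sum contributes $\sum_{i=1}^{n-1}(-1)^{n-i}\psi(x_1,\cdots,\widehat{x_i},\cdots,x_{n-1},x_n)f(x_i)$. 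Absorbing the third-sum term as the $i=n$ contribution of the fourth yields $\delta f(X_1,z) = -\theta_f(x_1,\cdots,x_n)$, whence $\delta\theta_f = -\delta^2 f = 0$.

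Before invoking this, one must verify that $\theta_f$ genuinely lies in $C^2(L,M)$, i.e.\ that it is skew-symmetric and $A$-multilinear. Skew-symmetry is immediate from the skew-symmetry of $[\cdot,\cdots,\cdot]$ and of $\psi$, together with the alternating sign $(-1)^{n-i}$. The non-trivial $A$-linearity check is in the last slot: expanding $\theta_f(x_1,\cdots,ax_n)$ using \eqref{eq:Rinhart1} and $A$-linearity of $f$ produces the extra term $\rho(x_1,\cdots,x_{n-1})(a)\,f(x_n)$ coming from $f([x_1,\cdots,ax_n])$, while expanding $\psi(x_1,\cdots,x_{n-1})f(ax_n)$ via condition~(3) of Definition~\ref{def-rep} produces exactly the same term with opposite sign; the two cancel, giving $\theta_f(x_1,\cdots,ax_n) = a\,\theta_f(x_1,\cdots,x_n)$. $A$-linearity in the remaining slots then follows from skew-symmetry.

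Should one prefer a self-contained verification that avoids the coboundary complex, one substitutes $\theta = \theta_f$ directly into \eqref{2-cocycle} and expands. The $f$-terms arising from the iterated bracket $[y_1,\cdots,y_{n-1},[x_1,\cdots,x_n]]$ on the LHS are reconciled via the fundamental identity~\eqref{Fundamental.identity}, while the double-composition terms of the form $\psi(\cdots)\circ\psi(y_1,\cdots,y_{n-1})\circ f$ collapse using the representation axioms~\eqref{rep1} and~\eqref{rep2}. In either approach the main obstacle is bookkeeping of the alternating signs $(-1)^{n-i}$ and of the index shifts induced by deleting $\widehat{x_i}$; the coboundary route has the advantage of packaging this combinatorics once and for all inside $\delta^2=0$, and is therefore the cleaner route.
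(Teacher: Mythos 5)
Your proposal is correct and follows essentially the same route as the paper, which simply observes that $\theta_f=\delta f$ (up to the overall sign, which you compute more carefully as $\theta_f=-\delta f$, an immaterial discrepancy since $\delta^2 f=0$ either way). Your additional verification that $\theta_f$ is genuinely a $2$-cochain (skew-symmetry and $A$-linearity in the last slot via the cancellation between \eqref{eq:Rinhart1} and condition (3) of Definition \ref{def-rep}) is a detail the paper omits but does not change the argument.
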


\begin{proof}
It is clear that $\theta_f=\delta f$, then $\theta_f$ is a $2$-cocycle.
\end{proof}
With the above notation we have the following theorem.
\begin{thm}
Let $(L, A, [\cdot, \cdots, \cdot], \rho)$ be an $n$-Lie Rinehart algebra
and $(M ,\psi)$ be a representation of  $L$. Define the map $$\Phi:T_{\theta}(L)\rightarrow
T_{\theta+\theta_{f}}(L) ~~\text{given~ by}~\Phi(x+m)=x+f(x)+m,\quad \forall x\in L,m\in M.
$$
Then the pair $(\Phi,Id_A)$ is an $n$-Lie Rinehart algebra  isomorphism.
\end{thm}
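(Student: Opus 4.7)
The plan is to verify that $(\Phi,\mathrm{Id}_A)$ satisfies the three things needed to be an $n$-Lie Rinehart algebra isomorphism: bijectivity, the two homomorphism conditions of Definition \ref{homorphism-Rinehart}, and preservation of the $n$-bracket. Bijectivity is immediate since $\Phi^{-1}(x+m)=x+m-f(x)$ is a two-sided inverse. For condition (1), the computation $\Phi(a(x+m))=ax+f(ax)+am=a(x+f(x)+m)=a\Phi(x+m)$ reduces to the $A$-linearity of $f$, which is precisely the defining property of a $1$-cochain in $C^{1}(L,M)$.

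For condition (2), observe that both $\rho_{\theta}$ and $\rho_{\theta+\theta_{f}}$ depend only on the $L$-component of their arguments and agree with $\rho$ there. Hence with $g=\mathrm{Id}_A$ the identity
\[
\rho_{\theta+\theta_{f}}\bigl(\Phi(x_1+m_1),\ldots,\Phi(x_{n-1}+m_{n-1})\bigr)(a)=\rho(x_1,\ldots,x_{n-1})(a)=\rho_{\theta}(x_1+m_1,\ldots,x_{n-1}+m_{n-1})(a)
\]
holds tautologically for every $a\in A$.

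The core step is bracket preservation. Using formula \eqref{crochet-theta} with cocycle $\theta+\theta_{f}$, one expands
\[
[\Phi(x_1+m_1),\ldots,\Phi(x_n+m_n)]_{\theta+\theta_{f}}=[x_1,\ldots,x_n]+\theta(x_1,\ldots,x_n)+\theta_{f}(x_1,\ldots,x_n)+\sum_{i=1}^{n}(-1)^{n-i}\psi(x_1,\ldots,\widehat{x_{i}},\ldots,x_n)\bigl(f(x_i)+m_i\bigr).
\]
Substituting the definition $\theta_{f}(x_1,\ldots,x_n)=f([x_1,\ldots,x_n])-\sum_{i=1}^{n}(-1)^{n-i}\psi(x_1,\ldots,\widehat{x_{i}},\ldots,x_n)f(x_i)$ causes the two sums involving $f(x_i)$ to cancel telescopically, leaving
\[
[x_1,\ldots,x_n]+f([x_1,\ldots,x_n])+\theta(x_1,\ldots,x_n)+\sum_{i=1}^{n}(-1)^{n-i}\psi(x_1,\ldots,\widehat{x_{i}},\ldots,x_n)m_i,
\]
which is exactly $\Phi\bigl([x_1+m_1,\ldots,x_n+m_n]_{\theta}\bigr)$ by the definitions of $\Phi$ and $[\cdot,\ldots,\cdot]_{\theta}$.

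I do not anticipate any real obstacle. The bracket-preservation identity is a telescoping cancellation built directly into the definition $\theta_{f}=\delta f$, and neither the fundamental identity nor the $2$-cocycle condition on $\theta$ enters the argument. The only care required is consistent bookkeeping of the signs $(-1)^{n-i}$ and of which components lie in $L$ versus $M$; once this is organized correctly, the inverse $\Phi^{-1}$ is automatically a homomorphism in the other direction, completing the isomorphism.
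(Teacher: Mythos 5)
Your proposal is correct and follows essentially the same route as the paper: both arguments reduce bracket preservation to the direct cancellation of the two sums $\sum_{i}(-1)^{n-i}\psi(x_1,\ldots,\widehat{x_i},\ldots,x_n)f(x_i)$ coming from the definition $\theta_f=\delta f$, and both note that the anchors depend only on the $L$-component so that condition (2) of Definition \ref{homorphism-Rinehart} holds trivially with $g=\mathrm{Id}_A$. Your write-up is in fact slightly more complete than the paper's, which leaves bijectivity and condition (1) as unelaborated assertions.
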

\begin{proof}
It is clear that $\Phi$ is a bijection from $L\oplus M$ to $L\oplus M$. For all  $x_{1},\cdots,x_{n}\in L$, $m_{1},\cdots,m_{n}\in M$,
\begin{eqnarray*}
&&\Phi([x_{1}+m_{1}\cdots,x_{n}+m_{n}]_{\theta})\\
&=&\Phi([x_{1},\cdots,x_{n}]+\theta(x_{1},\cdots,x_{n})+\displaystyle\sum_{i=1}^n(-1)^{n-i}\psi(x_1\cdots,\widehat{x_i},\cdots,x_n)m_i)\\
&=&[x_{1},\cdots,x_{n}]+\theta(x_{1},\cdots,x_{n})+f([x_{1},\cdots,x_{n}])+\displaystyle\sum_{i=1}^n(-1)^{n-i}\psi(x_1\cdots,\widehat{x_i},\cdots,x_n)m_i\\
&=&[x_{1},\cdots,x_{n}]+(\theta+\theta_{f})(x_{1},\cdots,x_{n})+\displaystyle\sum_{i=1}^n(-1)^{n-i}\psi(x_1\cdots,\widehat{x_i},\cdots,x_n)m_i\\
&&+\displaystyle\sum_{i=1}^n(-1)^{n-i}\psi(x_1\cdots,\widehat{x_i},\cdots,x_n)f(x_i)\\
&=&[x_{1}+f(x_{1})+m_{1},\cdots,x_{n}+f(x_{n})+m_{n}]_{\theta+\theta_{f}}\\
&=&[\Phi(x_{1}+m_{1}),\cdots,\Phi(x_{n}+m_{n})].
 \end{eqnarray*}
It is straightforward to check the identity $(1)$ of the Definition \ref{homorphism-Rinehart}.
 For all $x_1,\cdots,x_{n-1}\in L$, $m_1,\cdots,m_{n-1}\in M$ and $a\in A$ we have

 \begin{align*}
     \rho_{\theta}(x_1+m_1,\dots,x_{n-1}+m_{n-1})(a)& = \rho(x_1,\dots,x_{n-1})(a)\\
     &=\rho_{\theta+\theta_f}(x_1+f(x_1)+m_1,\cdots,x_{n-1}+f(x_{n-1})+m_{n-1})(a)\\
     &=\rho_{\theta+\theta_f}(\Phi(x_1+m_1),\cdots,\Phi(x_{n-1}+m_{n-1}))(a).
 \end{align*}

\end{proof}
\section{Crossed modules of $n$-Lie Rinehart algebras}\label{Crossed}
In  this section,  we define a notion of crossed modules for  $n$-Lie Rinehart algebras   which is related to the third dimensional cohomology group.

\begin{defi}\label{L-act-M} Let $(L, A, [\cdot,\cdots,\cdot],\rho)$ be an $n$-Lie Rinehart algebra, $(M,[\cdot,\cdots,\cdot]_M, \psi)$ be an  $n$-Lie $A$-algebra and  $\psi:  \wedge^{n-1} L\rightarrow End(M)$ be a linear map. We say that $M$ acts on $L$ if  $(M,\psi)$ is a representation of $L$ and the following condition holds
$$
\psi(x_1,\cdots,x_{n-1})[m_1,\cdots,m_n]_M= \displaystyle\sum_{i=1}^n[m_1,\cdots,\psi(x_1,\cdots,x_{n-1})m_i,\cdots,m_n]_M
$$
for all $x_1,\cdots,x_{n-1}\in L, m_1,\cdots,m_n\in M$.
The tuple $(M,[\cdot,\cdots,\cdot],\psi)$ will be called also $n$-Lie $A$-algebra $L$-module.
\end{defi}

Now for a $n$-Lie Rinehart algebra $(L, A, [\cdot,\cdots,\cdot],\rho)$ and $n$-Lie $A$-algebra $L$-module $(M, [\cdot,\cdots, \cdot]_M)$. Consider the direct sum $L\oplus M$ as an $A$-module:
$a(x+ m) = ax+am$. Define the anchor $\rho_{L\oplus M} : \wedge^{n-1}(L\oplus M) \to Der(A)$ by
$$
\rho_{ L\oplus M}(x_1+ m_1,\cdots,x_{n-1}+ m_{n-1}) = \rho(x_1,\cdots,x_{n-1})$$
and the bracket by
$$\small
[x_1+ m_1, \dots,x_n+ m_n]_{L\oplus M} = [x_1, \cdots,x_n] + [m_1, \cdots,m_n]_M+ \displaystyle\sum_{i=1}^n(-1)^{n-i}\psi(x_1,\cdots,x_{n-1})m_i
$$
for all $x_i\in L$ and $m_i\in M$, $i=1,\cdots,n.$

%It is obvious to check 
We have the following theorem.
\begin{thm}
Using the above notation, the tuple $(L\oplus M,A, [\cdot,\cdots,\cdot]_{ L\oplus M},\rho_{ L\oplus M})$  is a $n$-Lie Rinehart algebra if and only if the conditions of Definition \ref{L-act-M} hold. It is called a semidirect product of $n$-Lie Rinehart algebras, denoted by $L \rtimes M.$
\end{thm}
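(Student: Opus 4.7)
The plan is to verify the four defining axioms of an $n$-Lie Rinehart algebra (Definition \ref{def-3Lie rinh super}) for the tuple $(L\oplus M, A, [\cdot,\cdots,\cdot]_{L\oplus M}, \rho_{L\oplus M})$, and then read off the ``only if'' direction by specializing the same axioms. Since $\rho_{L\oplus M}$ depends only on the $L$-components of its arguments, the conditions involving the anchor reduce almost immediately to the corresponding conditions for $(L,A,[\cdot,\cdots,\cdot],\rho)$ and to property (3) of Definition \ref{def-rep}; in particular $\rho_{L\oplus M}(a(x_1+m_1),\ldots,x_{n-1}+m_{n-1}) = a\rho(x_1,\ldots,x_{n-1})$ and $\rho_{L\oplus M}$ takes values in $\Der(A)$ for free. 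The compatibility Eq. \eqref{eq:Rinhart1} for $L\oplus M$ is then checked by splitting $[x_1+m_1,\ldots,a(x_n+m_n)]_{L\oplus M}$ into its $L$-component and $M$-component and invoking the compatibility axiom for $(L,A)$ on the first component and property (3) of Definition \ref{def-rep} (the Leibniz rule for $\psi$) on the second.

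The main obstacle is verifying the fundamental Filippov identity \eqref{Fundamental.identity} for $[\cdot,\cdots,\cdot]_{L\oplus M}$. The strategy is to expand both sides for arguments $\tilde x_i = x_i+m_i$ and $\tilde y_j = y_j+n_j$, and to project the resulting equation onto $L$ and onto $M$ separately. The $L$-projection collapses to the Filippov identity for $(L,[\cdot,\cdots,\cdot])$, which holds by hypothesis. The $M$-projection is the delicate part: when grouped by degree in the $n_j$'s and $m_i$'s, it splits into four homogeneous batches of terms, each of which must vanish on its own. The batch purely in the $n_j$'s (coefficient of $[n_1,\ldots,n_n]_M$-type terms) cancels using the Filippov identity for $(M,[\cdot,\cdots,\cdot]_M)$; the batches mixing one $\psi$-operator and $M$-bracket terms cancel using precisely the compatibility condition $\psi(x_1,\ldots,x_{n-1})[m_1,\ldots,m_n]_M = \sum_i [m_1,\ldots,\psi(x_1,\ldots,x_{n-1})m_i,\ldots,m_n]_M$ from Definition \ref{L-act-M}; and the remaining purely $\psi$-applied-to-$m$ terms cancel by the representation axioms \eqref{rep1} and \eqref{rep2} for $(M,\psi)$, already bundled into ``$(M,\psi)$ is a representation of $L$'' in Definition \ref{L-act-M}.

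For the converse direction, assuming $L\oplus M$ is an $n$-Lie Rinehart algebra with the stated bracket and anchor, I would extract the two conditions of Definition \ref{L-act-M} by suitable specialization of the Filippov identity. Setting all $y_j\in L$ and $\tilde x_i = m_i \in M$ recovers the representation axioms \eqref{rep1}, \eqref{rep2} for $\psi$ as well as skew-symmetry behavior; setting the ``inner'' $n$-tuple entirely in $M$ and the ``outer'' $(n-1)$-tuple entirely in $L$ isolates exactly the compatibility identity between $\psi$ and $[\cdot,\cdots,\cdot]_M$. The $A$-linearity of $\psi$ (condition (2) of Definition \ref{def-rep}) and its Leibniz rule (condition (3)) follow similarly from Eq. \eqref{eq:Rinhart2} and Eq. \eqref{eq:Rinhart1} applied to mixed arguments. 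Thus the entire content of Definition \ref{L-act-M} is forced by the $n$-Lie Rinehart axioms on $L\oplus M$, completing the equivalence.
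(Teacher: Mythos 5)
The paper states this theorem without any proof, so there is no argument of the authors to compare yours against; I can only assess your proposal on its own terms. Your overall strategy (verify the four axioms of Definition \ref{def-3Lie rinh super}, reduce the anchor conditions to those of $(L,A,[\cdot,\cdots,\cdot],\rho)$ together with properties (2)--(3) of Definition \ref{def-rep}, and attack the fundamental identity by bookkeeping on the degree in the $m_i$'s and $n_j$'s) is the natural one, and your treatment of the $A$-module structure, of Eq.~\eqref{eq:Rinhart2}, of the compatibility condition \eqref{eq:Rinhart1}, and of the converse by specialization is sound. (You also silently correct the paper's bracket to $\sum_i(-1)^{n-i}\psi(x_1,\ldots,\widehat{x_i},\ldots,x_n)m_i$, which is surely what is intended.)

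The gap is in the assertion that the fundamental identity splits into batches \emph{each of which} cancels using only the conditions of Definition \ref{L-act-M}. Write $\tilde x_i=x_i+m_i$, $\tilde y_j=y_j+n_j$ and sort the $M$-component by bidegree in the $m$'s and $n$'s. The batches you name do cancel: the purely-$L$ and single-$M$-argument batches via \eqref{rep1}--\eqref{rep2}, the batch $[m_1,\ldots,m_{n-1},[n_1,\ldots,n_n]_M]_M$ via the Filippov identity for $M$, and the batch $\psi(x_1,\ldots,x_{n-1})[n_1,\ldots,n_n]_M$ via the derivation condition of Definition \ref{L-act-M}. But for $n\geq 3$ there are two further batches you do not account for. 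In bidegree $(n-1,1)$ the left-hand side yields $\sum_j(-1)^{n-j}[m_1,\ldots,m_{n-1},\psi(y_1,\ldots,\widehat{y_j},\ldots,y_n)n_j]_M$ while the right-hand side yields $\sum_j(-1)^{n-j}\psi(y_1,\ldots,\widehat{y_j},\ldots,y_n)[m_1,\ldots,m_{n-1},n_j]_M$; applying the derivation condition, their difference is $\sum_{j}\sum_{k}(-1)^{n-j}[m_1,\ldots,\psi(y_1,\ldots,\widehat{y_j},\ldots,y_n)m_k,\ldots,m_{n-1},n_j]_M$, which does not vanish by virtue of anything in Definition \ref{L-act-M}. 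Likewise, in bidegree $(1,n-1)$ the right-hand side produces $\sum_j\bigl[n_1,\ldots,\sum_i(-1)^{n-i}\psi(x_1,\ldots,\widehat{x_i},\ldots,x_{n-1},y_j)m_i,\ldots,n_n\bigr]_M$ with no counterpart on the left. For $n=2$ these collapse into the batches you did treat, which is why the classical Lie-algebra semidirect product goes through; for $n\geq 3$ you must either show these sums vanish or impose extra compatibility conditions on the action (this is precisely why the notion of action for Leibniz $n$-algebras in the Casas--Khmaladze--Ladra framework involves a whole family of maps rather than a single $\psi$). As it stands your ``if'' direction is incomplete, and the statement itself deserves scrutiny for $n\geq 3$ whenever both $\psi$ and $[\cdot,\cdots,\cdot]_M$ are nonzero.
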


The following definition gives a notion of crossed modules for $n$-Lie Rinehart algebras.
\begin{defi}\label{definition-crossed}
Let $(L, A,[\cdot,\cdots,\cdot],\rho)$ be a $n$-Lie Rinehart algebra, $(M, [\cdot,\cdots,\cdot], \psi)$ be a $n$-Lie $A$-algebra $L$-module. If a $n$-Lie
algebra homomorphism $\partial : M \to L$ satisfies:
\begin{enumerate}
    \item $\partial(\psi(x_1,\cdots,x_{n-1})m) = [x_1,\cdots,x_{n-1}, \partial m], \forall m \in M, x_1,\cdots,x_{n-1}\in L,$\label{def11}
\item $\psi(\partial m_1,\cdots, \partial m_{n-1})m = [m_1, \cdots,m_{n-1}, m], \forall m_1,\cdots, m_{n-1}, m \in  M,$\label{def12}
%\item $\psi(x, \partial r_1)r_2 = -\psi(x, \partial r_2)r_1,  \forall r_1, r_2 \in M,$
\item $\partial(am) = a\partial(m), \forall a \in  A, m \in M,$
\item $\rho(\partial(m_1),\cdots, \partial(m_{n-1}))(a) = 0, \forall a \in A, m_1,\cdots, m_{n-1} \in  M.$
\end{enumerate}
Then $(M, A, \psi, \partial)$ is called a crossed module of $n$-Lie Rinehart algebra $(L, A,[\cdot,\cdots,\cdot],\rho)$.
\end{defi}

\begin{rmk}\begin{itemize}\item[]
    \item[a)] Property 1. in the above definition means that the morphism  $\partial$ is equivariant with
respect to the $L$-action via $\psi$ on $M$ and the adjoint action on $L$.
\item[b)]  Property 2. is called Peiffer identity.
\end{itemize}

\end{rmk}
\begin{rmk}
\begin{enumerate}\item[]
\item  To each crossed module of  $n$-Lie Rinehart algebra $(L, A,[\cdot,\cdots,\cdot],\rho)$ and $\partial: M \to L$, we can  associates
a  exact sequence defined by
$$ \quad \quad 0 \longrightarrow {\mathfrak n} \xrightarrow{\ \ i\ \ }  M  \xrightarrow{\ \ \partial \ \ } L\xrightarrow{\ \ \pi\ \ }  \mathfrak p \longrightarrow 0.$$
where $\ker(\partial)= {\mathfrak n}$ and $\mathfrak p=coker(\partial)=L/Im(\partial).$
 \item
Let $(M, A, \psi, \partial)$ is called a crossed module of $n$-Lie Rinehart algebra $(L, A,[\cdot,\cdots,\cdot],\rho)$. Then we have
    \subitem{\bf a)} $Im(\partial)$ is an ideal of $L$.\label{lem11}
    \subitem{\bf b)}  $\ker(\partial)$ is a central ideal of $M$.\label{lem12}
\subitem{\bf c)}  Lifting elements of $\mathfrak p$ to $L$, the action of $L$ on $M$ induces an outer action of
$\mathfrak p$ on $M$   via $\overline\psi(\pi(x_1), \cdots,\pi(x_{n-1}))m = \psi(x_1,\cdots, x_{n-1})m,\forall x_1,\cdots,x_{n-1}\in L, m\in  M$.
\end{enumerate}
\end{rmk}

\begin{ex}
 Let $(L, A, [\cdot,\cdots,\cdot], \rho)$ be a $n$-Lie Rinehart algebra and $I \varsubsetneq L$ be an ideal  of  $(L,A,[\cdot,\cdots,\cdot],\rho)$. Then $(I, A, \mbox{ad},i)$ is a crossed module of the $n$-Lie Rinehart algebra $(L, A, [\cdot,\cdots,\cdot], \rho)$, where $i: I\rightarrow L$, $i(x)=x, \forall x\in I.$
\end{ex}

\begin{ex} \label{thm:inclusion} Let $(L, A, [\cdot,\cdots,\cdot]_L, \rho)$ and $(L', A, [\cdot,\cdots,\cdot]_{L'},\rho')$ be an $n$-Lie Rinehart algebras, and $f: L\rightarrow L'$ be an $n$-Lie Rinehart algebras homomorphism,
$\partial: \ker(f)\to L$ be the including mapping, that is, $\partial(u)=u$, for all $u\in \ker(f)$. Then $(\ker(f), A, \mbox{ad}, \partial)$ is a crossed module of $n$-Lie Rinehart algebra $(L, A, [\cdot,\cdots,\cdot]_L,\rho)$.
\end{ex}

 \begin{ex}
  Let $(L, A, [\cdot,\cdots,\cdot], \rho)$ be a $n$-Lie Rinehart algebra,  and $(M,\psi)$ be an $L$-module. Then $(M, A, \psi,0)$ is a crossed module of  $(L, A, [\cdot,\cdots,\cdot], \rho)$.
 \end{ex}

\begin{thm}
Let $(L, A,[\cdot,\cdots,\cdot],\rho)$ be a $n$-Lie Rinehart algebra and $(M, [\cdot,\cdots,\cdot], \psi)$ be a $n$-Lie $A$-algebra $L$-module. Then a morphism $\partial : M\to L$ is a crossed module of $n$-Lie Rinehart algebra $L$ if and only if
the maps
$$(Id_L+\partial):L\rtimes M\to L\rtimes L~~~\text{and}~~(\partial+Id_M):M\rtimes M\to L\rtimes M $$
are homomorphisms of $n$-Lie Rinehart algebras.
\end{thm}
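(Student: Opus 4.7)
The plan is to prove the biconditional by unpacking what it means for the two maps $\Phi_1 := Id_L + \partial: L \rtimes M \to L \rtimes L$ and $\Phi_2 := \partial + Id_M: M \rtimes M \to L \rtimes M$ to be $n$-Lie Rinehart homomorphisms in the sense of Definition \ref{homorphism-Rinehart}. In each semidirect product, the bracket contains a ``pure first factor'' term, a ``pure second factor'' term, and mixed action terms as in Definition \ref{L-act-M}, while the anchor is the projection onto the first factor composed with $\rho$. I would first record that $L \rtimes L$ is a genuine $n$-Lie Rinehart algebra because $\ad$ makes $L$ into an $L$-module in the sense of Definition \ref{L-act-M} by the fundamental identity, and similarly $M \rtimes M$ is well defined via the adjoint action of $M$ on itself (recall $M$ has zero anchor as an $n$-Lie $A$-algebra).

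For the forward direction, assume $(M, A, \psi, \partial)$ is a crossed module. Expanding both sides of the bracket-preservation identity for $\Phi_1$ applied to inputs $x_i + m_i$, the pure $L$-terms on each side agree, the pure $M$-terms match because $\partial$ restricts to an $n$-Lie algebra morphism $M \to L$ (consequence of conditions (1)--(2) together with the original $n$-Lie structures), and the mixed terms
\[
\partial\big(\psi(x_1,\ldots,x_{n-1})m\big) = [x_1,\ldots,x_{n-1},\partial m]
\]
match by axiom (1). The $A$-linearity requirement of Definition \ref{homorphism-Rinehart} follows from axiom (3), while anchor preservation is automatic since both $L \rtimes M$ and $L \rtimes L$ have anchor $\rho$ on the $L$-factor and $\Phi_1$ acts as identity there. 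For $\Phi_2$, bracket preservation reduces to matching $[m_1,\ldots,m_{n-1},n]_M$ (from $M \rtimes M$) with $\psi(\partial m_1, \ldots,\partial m_{n-1})n$ (from $L \rtimes M$), which is precisely the Peiffer identity (axiom (2)); anchor preservation forces $\rho(\partial m_1, \ldots, \partial m_{n-1}) = 0$, which is axiom (4); $A$-linearity again reduces to axiom (3).

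For the converse, I would specialize the bracket-preservation identities to isolate each axiom. Feeding inputs of the form $(x_1, \ldots, x_{n-1}, m)$ into $\Phi_1$'s bracket identity isolates axiom (1) and the morphism property $\partial[m_1,\ldots,m_n]_M = [\partial m_1, \ldots, \partial m_n]_L$; the $A$-linearity of $\Phi_1$ on the $M$-summand yields axiom (3); the analogous specialization for $\Phi_2$ produces axiom (2); anchor preservation of $\Phi_2$ produces axiom (4). These four recovered identities are exactly the defining conditions of a crossed module.

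The main obstacle is careful bookkeeping: the $n$-ary semidirect bracket contributes $O(n)$ mixed summands with sign pattern $(-1)^{n-i}$, and one must verify that each of the four crossed-module axioms appears as the unique extra condition needed to balance one specific batch of those summands, rather than appearing as a linear combination. A secondary subtlety is that the adjoint is not in general a representation of an $n$-Lie Rinehart algebra on itself (see the remark after Proposition on representations), so the well-definedness of $L \rtimes L$ must be justified by the Definition \ref{L-act-M} criterion applied with $M = L$ and $\psi = \ad$, where the relevant compatibility reduces to the fundamental identity of $L$.
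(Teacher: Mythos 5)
Your proposal is correct and takes essentially the same route as the paper: expand both sides of the bracket-preservation identity for each of the two maps, separate the pure and mixed summands, and read off the four crossed-module axioms (the paper only writes out the computation for $(Id_L+\partial)$ and condition (1), dismissing the rest as ``similar,'' so your systematic attribution of axioms (2)--(4) to the second map's bracket, anchor, and $A$-linearity conditions is, if anything, more complete than the source). One caveat: your claim that the well-definedness of $L\rtimes L$ ``reduces to the fundamental identity'' is not accurate, since Definition \ref{def-rep} also requires $\ad(ax_1,\cdots,x_{n-1})=a\,\ad(x_1,\cdots,x_{n-1})$, which by the paper's own remark fails for a general $n$-Lie Rinehart algebra (the Leibniz rule produces an extra $\rho$-term when $a$ is moved out of a non-final slot); this issue is equally unaddressed in the paper's statement, but it deserves an explicit hypothesis or restriction rather than an appeal to the fundamental identity.
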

\begin{proof}
For all $x_1,\cdots,x_n\in L$ and $m_1,\cdots,m_n\in M$, we have
\begin{align*}
    &(Id_L+\partial)[x_1+m_1,\cdots,x_n+m_m]_{L\rtimes M}\\
    =&(Id_L+\partial)([x_1, \cdots,x_n] + [m_1, \cdots,m_n]_M+ \displaystyle\sum_{i=1}^n(-1)^{n-i}\psi(x_1,\cdots,x_{n-1})m_i)\\
    =&[x_1, \cdots,x_n]+\partial[m_1, \cdots,m_n]_M+\displaystyle\sum_{i=1}^n(-1)^{n-i}\partial(\psi(x_1,\cdots,x_{n-1})m_i)).
\end{align*}
On the others hand,
\begin{align*}
   & [(Id_L+\partial)(x_1+m_1),\cdots,(Id_L+\partial)(x_n+m_m)]_{L\rtimes M}\\
   =& [x_1+\partial m_1,\cdots,x_n+\partial m_n]_{L\rtimes M}\\
    =&[x_1, \cdots,x_n] + [\partial m_1, \cdots,\partial m_n]_M+ \displaystyle\sum_{i=1}^n(-1)^{n-i}\psi(x_1,\cdots,x_{n-1})\partial m_i.
\end{align*}
Then, $(Id_L+\partial)$ is homomorphism of $n$-Lie algebra if and only if the condition \ref{def11}.  in Definition \ref{definition-crossed} is satisfied. Similarly, with the same computation we can check the other conditions.
\end{proof}

\begin{defi}

Two crossed modules $\partial:M\to L$ (with action $\psi$) and $\partial':M'\to L'$  (with action $\psi'$) such that
$\ker (\partial) = \ker (\partial') =: \mathfrak n$ and $coker (\partial)=coker (\partial')=: \mathfrak p$ are called elementary equivalent if there is a morphism of $n$-Lie $A$-algebras $\delta: M\to M'$ and a morphism of $n$-Lie Rinehart algebras $\gamma:L\to L'$ which are compatible with the actions,
meaning
$$
\delta(\psi(x_1,\cdots,x_{n-1})(m)) = \psi'(\gamma(x_1),\cdots,\gamma(x_{n-1}))(\delta(m)),
$$

for all $x_1,\cdots, x_{n-1}\in L$ and all $m \in M$, and such that the following diagram is commutative:
$$
\xymatrix{
(\mathcal{E}):0 \ar[r] & \mathfrak n \ar[d]^{id_\mathfrak n} \ar[r]^{i} & {M} \ar[d]^{\delta} \ar[r]^{\partial} & {L} \ar[d]^{\gamma} \ar[r]^{\pi} &  {\mathfrak p} \ar[d]^{id_{\mathfrak p}} \ar[r] & 0 \\
(\mathcal{E'}):0 \ar[r] & \mathfrak n  \ar[r]^{i'} &M' \ar[r]^{\partial'} & {L'}  \ar[r]^{\pi'} &  {\mathfrak p}  \ar[r] & 0}
\vspace{.5cm}
$$
\end{defi}
Let us denote by $Cr_{mod}(\mathfrak n,\mathfrak p)$ the set of equivalence classes of $n$-Lie Rinehart algebra
crossed modules with respect to fixed kernel $\mathfrak n$ and fixed cokernel $\mathfrak p$.

\

In the following result, we will interest to ternary case. The situation for $n\geq 3$ is similar and we leave to the reader the routine modifications to establish it.
\begin{thm}
For any $3$-Lie Rinehart algebra $(L, A,[\cdot,\cdot,\cdot],\rho)$  and $(M, [\cdot,\cdot,\cdot]_M,\psi)$ be a $A$-algebra module on $L$,  there is a canonical map $$\Phi : Cr_{mod}(\mathfrak n,\mathfrak p)\to H^3(\mathfrak n,\mathfrak p).$$
\end{thm}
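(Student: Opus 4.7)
The plan is to adapt the classical construction that associates a $3$-cocycle to a crossed module (analogous to the Lie algebra case, cf.~\cite{Casas&Ladra&Pirashvili}) to our ternary Rinehart setting. Given a crossed module $\partial:M\to L$ representing an element of $Cr_{mod}(\mathfrak n,\mathfrak p)$, I will construct a $3$-cocycle $\omega\in\mathcal Z^3(\mathfrak p,\mathfrak n)$ whose cohomology class depends only on the elementary equivalence class of $\partial$, and then define $\Phi([\partial])=[\omega]$.

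First, choose a $\mathbb K$-linear section $\sigma:\mathfrak p\to L$ of the projection $\pi:L\to\mathfrak p$. Since $\pi$ is a morphism of $3$-Lie Rinehart algebras, the defect
$$f(p_1,p_2,p_3):=[\sigma p_1,\sigma p_2,\sigma p_3]_L-\sigma[p_1,p_2,p_3]_{\mathfrak p}$$
lands in $\Img\partial=\ker\pi$, and I fix a $\mathbb K$-linear lift $\widetilde f:\wedge^3\mathfrak p\to M$ with $\partial\circ\widetilde f=f$. Then I would define $\omega\in C^3(\mathfrak p,\mathfrak n)$ by applying the coboundary expression from Section~\ref{representation-n-Lie} formally to $\widetilde f$, namely
$$\omega(P_1,P_2,p)\;:=\;\overline\psi(P_1)\widetilde f(P_2,p)-\overline\psi(P_2)\widetilde f(P_1,p)-\widetilde f([P_1,P_2]_F,p)+(\text{remaining adjoint terms})$$
for $P_i\in\wedge^2\mathfrak p$ and $p\in\mathfrak p$, where $\overline\psi$ is the induced $\mathfrak p$-action on $\mathfrak n$ from the last remark following Definition~\ref{definition-crossed}.

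Then I would run three verifications. First, $\omega$ actually lands in $\mathfrak n$: applying $\partial$ to $\omega(P_1,P_2,p)$ and using items $(1)$ and $(2)$ of Definition~\ref{definition-crossed} (equivariance and Peiffer identity), the output is precisely the ternary fundamental identity inside $L$, which vanishes. Second, $\omega$ is a genuine $A$-multilinear $3$-cochain in the sense of Section~\ref{representation-n-Lie}: items $(3)$ and $(4)$ of Definition~\ref{definition-crossed} guarantee $\partial$ is $A$-linear and that $\rho$ is trivial on $\Img\partial$, so the anchor descends to a well-defined Rinehart action of $\mathfrak p$ on $\mathfrak n$ through $\overline\psi$. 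Third, $\delta\omega=0$: this is a direct but tedious expansion using the fundamental identity in $L$, the $3$-Lie $A$-algebra module axioms on $M$, and the Rinehart compatibility \eqref{eq:Rinhart1} to cancel the extra derivation terms produced by the anchor.

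Finally, I would show $[\omega]\in H^3(\mathfrak p,\mathfrak n)$ does not depend on the choices. A different section $\sigma'$ differs from $\sigma$ by $\partial\circ h$ for some $h:\mathfrak p\to M$, and a different lift $\widetilde f'$ differs from $\widetilde f$ by a map valued in $\mathfrak n$; both alterations modify $\omega$ by the coboundary of an explicit $2$-cochain built from $h$ and this error term. An elementary equivalence $(\delta,\gamma)$ of crossed modules transports a choice of $(\sigma,\widetilde f)$ for $\partial$ into one for $\partial'$, and because $\gamma$ and $\delta$ restrict to the identity on $\mathfrak p$ and $\mathfrak n$ respectively, the two $3$-cocycles differ by a coboundary. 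Setting $\Phi([\partial])=[\omega]$ yields the desired canonical map. The main obstacle is the cocycle verification $\delta\omega=0$: the ternary fundamental identity produces many cross-terms, and the cancellations require simultaneous use of the Peiffer identity, the equivariance of $\partial$, and the Rinehart anchor condition \eqref{eq:Rinhart1} — essentially all the axioms at once — which is where virtually all of the technical work of the proof is concentrated.
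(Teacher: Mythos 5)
Your proposal follows essentially the same route as the paper: choose a section of $\pi$, measure its failure $\alpha$ to be a homomorphism, lift $\alpha$ through $\partial$ to a map $\beta$ into $M$, apply the coboundary-type expression to $\beta$ to obtain a $5$-linear map, show via the equivariance axiom that its image under $\partial$ reduces to the fundamental identity in $L$ and hence vanishes (so it lands in $\mathfrak n$), and then check independence of the section, the lift, and the representative of the equivalence class by exhibiting explicit coboundaries. This matches the paper's construction of $h_{\mathcal E}$ and the well-definedness argument for $\Phi$, so the approach is correct and not genuinely different.
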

\begin{proof}
Let $\partial : M \to L$ be a crossed module of $3$-Lie Rinehart algebra.
Given a crossed extension
 $$
 (\mathcal{E}):\quad \quad 0 \longrightarrow {\mathfrak n}=\ker(\partial)\xrightarrow{\ \ i\ \ }  M  \xrightarrow{\ \ \partial \ \ } L\xrightarrow{\ \ \pi\ \ }  \mathfrak p=coker(\partial) \longrightarrow 0.$$
 The first step is to take a linear section $s$ of $\pi$ and to compute the failure of
$s$ to be a $3$-Lie Rinehart algebra homomorphism, i.e.
$$\alpha(x_1,x_2,x_3)=[s(x_1),s(x_2),s(x_3)]-s([x_1,x_2,x_3]), \forall x_1,x_2,x_3\in \mathfrak p.$$
It is clear that $\alpha$ is $3$-$A$-linear map and skew-symmetric. Indeed,
\begin{align*}
    \alpha(x_1,x_2,ax_2)=&[s(x_1),s(x_2),s(ax_3)]-s([x_1,x_2,ax_3])\\
    =&a[s(x_1),s(x_2),s(x_3)]+\psi(s(x_1),s(x_2))as(x_3)\\
    &-as([x_1,x_2,x_3])-s(\psi(x_1,x_2)ax_3)\\
    =&a([s(x_1),s(x_2),s(x_3)]-s([x_1,x_2,x_3])\\
    =&a\alpha(x_1,x_2,x_3).
\end{align*}
Since $\pi$ is a $3$-Lie Rinehart algebra homorphism and $\pi s=Id_L$, then it is obvious that $\pi(\alpha(x,y,z))=0$. So $\alpha(x_1, x_2,x_3) \in \ker(\pi) = Im(\partial)\subset L$.

  Choose a linear section $\sigma:Im(\partial)\to M,$ $\partial \sigma=Id_M$ and take
  \begin{align*}
      \beta(x_1,x_2,x_3)=\sigma \alpha(x_1,x_2,x_4).%([s(x_1),s(x_2),s(x_3)]-s([x_1,x_2,x_3])).
  \end{align*}
  Now, define
  \begin{align*}
      h_\mathcal E(x_1,x_2, x_3, x_4, x_5)=&\beta([x_1,x_2, x_3], x_4, x_5)+\beta(x_3,[x_1,x_2, x_4], x_5)+\beta(x_1, x_2,[ x_3,x_4, x_5])\\
      +&\beta(x_3,x_4, [x_1, x_2,x_5]) +\tilde\psi(sx_1, sx_2)\beta(x_3,x_4, x_5)+\tilde\psi(sx_3,sx_4) \beta(x_1, x_2,x_5)\\
      -&\tilde\psi(sx_3,sx_5) \beta(x_1, x_2,x_4)+\tilde\psi(sx_4,sx_5) \beta(x_1, x_2,x_3)
  \end{align*}
  Since $\beta$ is $3$-$A$-linear then $ h_\mathcal E$ also it. Moreover,
  \begin{align*}
      \partial h_\mathcal E(x_1,x_2,x_3,x_4,x_5)=&\alpha([x_1,x_2, x_3], x_4, x_5)+\alpha(x_3,[x_1,x_2, x_4], x_5)+\alpha(x_1, x_2,[ x_3,x_4, x_5])\\
      &+\alpha(x_3,x_4, [x_1, x_2,x_5]) +\partial\tilde\psi(sx_1, sx_2)\beta(x_3,x_4, x_5)+\partial\tilde\psi(sx_3,sx_4) \beta(x_1, x_2,x_5)\\
      &-\partial\tilde\psi(sx_3,sx_5) \beta(x_1, x_2,x_4)+\partial\tilde\psi(sx_4,sx_5) \beta(x_1, x_2,x_3)\\
      =&[[sx_1,sx_2, sx_3], sx_4, sx_5]-s([[x_1,x_2, x_3], x_4, x_5])\\
      &+[sx_3,[sx_1,sx_2, sx_4], sx_5]-s([x_3,[x_1,x_2, x_4], x_5])\\
      &+[sx_1, sx_2,[ sx_3,sx_4, sx_5]]-s([x_1, x_2,[ x_3,x_4, x_5]])\\
      &+[sx_3,sx_4, [sx_1, sx_2,sx_5]]-s([x_3,x_4, [x_1, x_2,x_5]])\\
      &+[sx_1, sx_2,\alpha(x_3,x_4, x_5)]+[sx_3,sx_4, \alpha(x_1, x_2,x_5)]\\
      &-[sx_3,sx_5, \alpha(x_1, x_2,x_4)]+[sx_4,sx_5,\alpha(x_1, x_2,x_3)]\\
      =&[[sx_1,sx_2, sx_3], sx_4, sx_5]-s([[x_1,x_2, x_3], x_4, x_5])\\
      &+[sx_3,[sx_1,sx_2, sx_4], sx_5]-s([x_3,[x_1,x_2, x_4], x_5])\\
      &+[sx_1, sx_2,[ sx_3,sx_4, sx_5]]-s([x_1, x_2,[ x_3,x_4, x_5]])\\
      &+[sx_3,sx_4, [sx_1, sx_2,sx_5]]-s([x_3,x_4, [x_1, x_2,x_5]])\\
      &+[sx_1, sx_2,[sx_3,sx_4, sx_5]-s[x_3,x_4, x_5]]\\
      &+[sx_3,sx_4, [sx_1, sx_2,sx_5]-s[x_1, x_2,x_5]]\\
      &-[sx_3,sx_5, [sx_1, sx_2,sx_4]-s[x_1, x_2,x_4]]\\
     & +[sx_4,sx_5, [sx_1, sx_2,sx_3]-s[x_1, x_2,x_3]]\\
      &=0.
  \end{align*}
  Then, $Im(h_\mathcal E)\subset\ker\partial=\mathfrak n$. Thus, we define a
linear map $h_\mathcal E :\wedge^5 \mathfrak p\to \mathfrak n$ which is $3$-cochain. Routine calculations show that $\delta^3 h_\mathcal E=0.$ Then, $h_\mathcal E\in Z^3(\mathfrak p;\mathfrak n)$.
Define $$\Phi: Cr_{mod}(\mathfrak p;\mathfrak n)\to H^3(\mathfrak p;\mathfrak n),~~~~\Phi(\mathcal E)= |h_\mathcal E|.$$
Now, we will check  $\Phi$ is a well defined that is  the class of $h_\mathcal E$  does not depend on the sections $s, \sigma$ and if there is a map of crossed extensions $\mathcal E\to \mathcal E'$, then $|h_\mathcal E|=|h_{\mathcal E'}|$ in $H^3(\mathfrak p;\mathfrak n)$.\\
Let $\bar s:  \mathfrak p \to L$ be another linear section of $\pi$ and $\overline{h}_{\mathcal E}$ be the $3$-cocycle defined using
$\bar s$ instead of $s$. Since $s$ and $\bar s$ are both sections of $\pi$ there exists a linear map $g:\mathfrak p\to M$
with $\bar s- s = \partial g$. Then,
\begin{align*}
 &( h_\mathcal E-\overline{h}_{\mathcal E})(x_1,x_2, x_3, x_4, x_5)\\
 =&(\beta-\overline \beta)([x_1,x_2, x_3], x_4, x_5)+(\beta-\overline \beta)(x_3,[x_1,x_2, x_4], x_5)\\
 &+(\beta-\overline \beta)(x_1, x_2,[ x_3,x_4, x_5])-(\beta-\overline \beta)(x_3,x_4, [x_1, x_2,x_5]) \\
 &+ \psi(sx_1, sx_2)\beta(x_3,x_4, x_5)- \psi(\bar sx_1, \bar sx_2)\overline \beta(x_3,x_4, x_5)\\
 &+ \psi(sx_3,sx_4) \beta(x_1, x_2,x_5)- \psi(\bar sx_3,\bar sx_4) \overline \beta(x_1, x_2,x_5)\\
    &- \psi(sx_3,sx_5) \beta(x_1, x_2,x_4)+ \psi(\bar sx_3,\bar sx_5) \overline\beta(x_1, x_2,x_4)\\
    &+ \psi(sx_4,sx_5) \beta(x_1, x_2,x_3)  - \psi(\bar sx_4,\bar sx_5) \overline \beta(x_1, x_2,x_3) \\
    =&(\beta-\overline \beta)([x_1,x_2, x_3], x_4, x_5)+(\beta-\overline \beta)(x_3,[x_1,x_2, x_4], x_5)\\
 &+(\beta-\overline \beta)(x_1, x_2,[ x_3,x_4, x_5])-(\beta-\overline \beta)(x_3,x_4, [x_1, x_2,x_5]) \\
 &+ \psi(sx_1, sx_2)(\beta-\overline \beta)(x_3,x_4, x_5)- \psi(( s-\bar s)x_1, ( s- \bar s)x_2)\overline \beta(x_3,x_4, x_5)\\
 &+ \psi((s-\bar s)x_1, sx_2)\overline \beta(x_3,x_4, x_5)+ \psi(sx_1, (s-\bar s)x_2)\overline \beta(x_3,x_4, x_5)\\
 &+ \psi(sx_3, sx_4)(\beta-\overline \beta)(x_1,x_2, x_5)- \psi(( s-\bar s)x_3, ( s- \bar s)x_4)\overline \beta(x_1,x_2, x_5)\\
 &+ \psi((s-\bar s)x_3, sx_4)\overline \beta(x_1,x_2, x_5)+ \psi(sx_3, (s-\bar s)x_4)\overline \beta(x_1,x_2, x_5)\\
 &- \psi(sx_3, sx_5)(\beta-\overline \beta)(x_1,x_2, x_4)+ \psi(( s-\bar s)x_3, ( s- \bar s)x_5)\overline \beta(x_1,x_2, x_4)\\
 &- \psi((s-\bar s)x_3, sx_5)\overline \beta(x_1,x_2, x_4)- \psi(sx_3, (s-\bar s)x_5)\overline \beta(x_1,x_2, x_4)\\
    &+ \psi(sx_4, sx_5)(\beta-\overline \beta)(x_1,x_2, x_3)- \psi(( s-\bar s)x_4, ( s- \bar s)x_5)\overline \beta(x_1,x_2, x_3)\\
 &+ \psi((s-\bar s)x_4, sx_5)\overline \beta(x_1,x_2, x_3)+ \psi(sx_4, (s-\bar s)x_5)\overline \beta(x_1,x_2, x_3)\\
   =&(\beta-\overline \beta)([x_1,x_2, x_3], x_4, x_5)+(\beta-\overline \beta)(x_3,[x_1,x_2, x_4], x_5)\\
 &+(\beta-\overline \beta)(x_1, x_2,[ x_3,x_4, x_5])-(\beta-\overline \beta)(x_3,x_4, [x_1, x_2,x_5]) \\
 &+ \psi(sx_1, sx_2)(\beta-\overline \beta)(x_3,x_4, x_5)+ \psi(sx_3, sx_4)(\beta-\overline \beta)(x_1,x_2, x_5)\\
 &- \psi(sx_3, sx_5)(\beta-\overline \beta)(x_1,x_2, x_4)+ \psi(sx_4, sx_5)(\beta-\overline \beta)(x_1,x_2, x_3)\\
  &-[gx_1, gx_2, [\bar sx_3,\bar sx_4,\bar sx_5]-\bar s[x_3,x_4,x_5]]\\
 &+\psi( sx_2,[\bar sx_3,\bar sx_4, \bar sx_5]-\bar s[x_3,x_4, x_5]])gx_1\\
 &-\psi(sx_1,  [\bar sx_3,\bar sx_4, \bar sx_5]-\bar s[x_3,x_4, x_5]])gx_2\\
 &-[gx_3, gx_4, [\bar sx_1,\bar sx_2, \bar sx_5]-\bar s[x_1,x_2, x_5]]\\
 &+\psi( sx_4, [\bar sx_1,\bar sx_2, \bar sx_5]-\bar s[x_1,x_2, x_5]])gx_3\\
 &+ \psi(sx_3,[\bar sx_1,\bar sx_2,\bar  sx_5]-\bar s[x_1,x_2, x_5]])gx_4\\
  &+ [gx_3, gx_5,[\bar sx_1,\bar sx_2, \bar sx_4]-\bar s[x_1,x_2, x_4]]\\
 &- \psi(sx_5,[\bar sx_1,\bar sx_2,\bar  sx_4]-\bar s[x_1,x_2, x_4]])gx_3\\
 &+\psi(sx_3,[\bar sx_1,\bar sx_2,\bar  sx_4]-\bar s[x_1,x_2, x_4]])gx_5\\
 &- [gx_4, gx_5, [\bar sx_1,\bar sx_2, \bar sx_3]-\bar s[x_1,x_2, x_3]]\\
     &+ \psi( sx_5,[\bar sx_1,\bar sx_2, \bar sx_3]-\bar s[x_1,x_2, x_3]])gx_4\\
 &-\psi(sx_4,[\bar sx_1,\bar sx_2, \bar sx_3]-\bar s[x_1,x_2, x_3]])gx_5.
\end{align*}

Now, define $\tilde\beta : \wedge^3\mathfrak p\mathfrak \to M$ by
\begin{align*}
    \tilde\beta(x_1,x_2,x_3)=&g[x_1,x_2,x_3]-[gx_1,gx_2,gx_3] - \psi( sx_1, \bar sx_3) gx_2+\psi( sx_1,  sx_2)gx_3+\psi(\bar sx_2,  sx_3) gx_1.
\end{align*}
Furthermore, we have
\begin{align*}
   \partial \tilde\beta(x_1,x_2,x_3)=&\partial g[x_1,x_2,x_3]-\partial[gx_1,gx_2,gx_3] - \partial\psi( sx_1, \bar sx_3) gx_2\partial\psi( sx_1,  sx_2)gx_3+\partial\psi(\bar sx_2,  sx_3) gx_1\\
   =&(s-\bar s)[x_1,x_2,x_3]-[(s-\bar s)x_1,(s-\bar s)x_2,(s-\bar s)x_3] - [sx_1, \bar sx_3, (s-\bar s)x_2]\\
   &+[sx_1,  sx_2,(s-\bar s)x_3]+[\bar sx_2,  sx_3, (s-\bar s)x_1]\\
   =&\partial(\beta-\bar \beta)(x_1,x_2,x_3).
\end{align*}
Then $(\beta-\bar \beta -\tilde \beta)(x_1,x_2,x_3)\in \ker \partial=\mathfrak n.$ Thus the map $\beta-\bar \beta -\tilde\beta:\wedge^3\mathfrak p\to\mathfrak n$.

Moreover, if we replace $\beta-\bar \beta$  by $\tilde \beta$ in the expression of $h_\mathcal E-\overline{h}_{\mathcal E}$, then the equality still remains true in $H^3(\mathfrak p;\mathfrak n)$ since
the difference is the coboundary $\delta^3(\beta-\bar \beta -\tilde \beta)$. Using Eqs. \eqref{rep1} and \eqref{rep2} and a routine calculations, we obtain
$$(h_\mathcal E-\overline{h}_{\mathcal E})(x_1,x_2,x_3,x_4,x_5)=\delta^3(\beta-\bar \beta -\tilde \beta)(x_1,x_2,x_3,x_4,x_5).$$
Hence the class of $h_\mathcal E$ does not depend on the section $s$.

Next, consider a map
$$
\xymatrix{
(\mathcal{E}):0 \ar[r] & \mathfrak n \ar[d]^{id_\mathfrak n} \ar[r]^{i} & {M} \ar[d]^{\delta} \ar[r]^{\partial} & {L} \ar[d]^{\gamma} \ar[r]^{\pi} &  {\mathfrak p} \ar[d]^{id_{\mathfrak p}} \ar[r] & 0 \\
(\mathcal{E'}):0 \ar[r] & \mathfrak n  \ar[r]^{i'} &M' \ar[r]^{\partial'} & {L'}  \ar[r]^{\pi'} &  {\mathfrak p}  \ar[r] & 0}
\vspace{.5cm}
$$
of crossed module.
 Let $s': \mathfrak p \to  L'$ and $\sigma': Im(\partial')\to M'$ be sections of $\pi'$ and $\partial'$, respectively. Note
that $(\pi'\gamma s)(x) = (\pi s)(x) = x, ~~\forall x \in \mathfrak p$. Therefore, $\gamma s : \mathfrak p\to L'$ is another section of $\pi'$. Thus, $$\beta'(x_1,x_2,x_3)=\sigma'([\gamma sx_1,\gamma sx_2,\gamma sx_3]-\gamma s[x_1,x_2,x_3]).$$
Using the above notation, we have
 \begin{align*}
     & h_\mathcal E(x_1,x_2, x_3, x_4, x_5)-h_{\mathcal E'}(x_1,x_2, x_3, x_4, x_5)\\
     =&\beta([x_1,x_2, x_3], x_4, x_5)+\beta(x_3,[x_1,x_2, x_4], x_5)+\beta(x_1, x_2,[ x_3,x_4, x_5])\\
      &+\beta(x_3,x_4, [x_1, x_2,x_5]) +\tilde\psi(sx_1, sx_2)\beta(x_3,x_4, x_5)+\tilde\psi(sx_3,sx_4) \beta(x_1, x_2,x_5)\\
      &-\tilde\psi(sx_3,sx_5) \beta(x_1, x_2,x_4)+\tilde\psi(sx_4,sx_5) \beta(x_1, x_2,x_3)\\
      &-      \beta'([x_1,x_2, x_3], x_4, x_5)+\beta'(x_3,[x_1,x_2, x_4], x_5)-\beta'(x_1, x_2,[ x_3,x_4, x_5])\\
      &-\beta'(x_3,x_4, [x_1, x_2,x_5]) -\tilde\psi(\gamma sx_1, \gamma sx_2)\beta'(x_3,x_4, x_5)-\tilde\psi(\gamma sx_3,\gamma sx_4) \beta'(x_1, x_2,x_5)\\
      &+\tilde\psi(\gamma sx_3,\gamma sx_5) \beta'(x_1, x_2,x_4)-\tilde\psi(\gamma sx_4,\gamma sx_5) \beta'(x_1, x_2,x_3)\\
      =&
      (\delta\sigma-\sigma'\gamma)([s[x_1,x_2, x_3], sx_4, sx_5]-s[[x_1,x_2, x_3], x_4, x_5])\\
      &+(\delta\sigma-\sigma'\gamma)([sx_3,s[x_1,x_2, x_4], sx_5]-s[x_3,[x_1,x_2, x_4], x_5])\\
      &+(\delta\sigma-\sigma'\gamma)([sx_1, sx_2,s[x_3,x_4, x_5]]-s[x_1, x_2,[ x_3,x_4, x_5]])\\
      &+(\delta\sigma-\sigma'\gamma)([sx_3,sx_4, s[x_1, x_2,x_5]]-s[x_3,x_4, [x_1, x_2,x_5]]) \\
      &+\tilde\psi(\gamma sx_1, \gamma sx_2)(\delta\sigma-\sigma'\gamma)([sx_3,sx_4, sx_5]-s[x_3,x_4, x_5])\\
      &+\tilde\psi(\gamma sx_3,\gamma sx_4) (\delta\sigma-\sigma'\gamma)([sx_1, sx_2,sx_5]-s[x_1, x_2,x_5])\\
      &-\tilde\psi(\gamma sx_3,\gamma sx_5) (\delta\sigma-\sigma'\gamma)([sx_1, sx_2,sx_4]-s[x_1, x_2,x_4])\\
      &+\tilde\psi(\gamma sx_4,\gamma sx_5) (\delta\sigma-\sigma'\gamma)([sx_1, sx_2,sx_3]-s[x_1, x_2,x_3]).
  \end{align*}
  It follows from the above expression that
  $$h_\mathcal E(x_1,x_2, x_3, x_4, x_5)-h_{\mathcal E'}(x_1,x_2, x_3, x_4, x_5)= \delta^3(\delta\sigma-\sigma'\gamma)(x_1,x_2, x_3, x_4, x_5),$$
  where  $\delta\sigma-\sigma'\gamma:\wedge^3\mathfrak p\to M.$
 Then, $[h_\mathcal E]=[h_{\mathcal E'}]$ in $H^3(\mathfrak p,\mathfrak n)$, where $[h_\mathcal E]$ denoted the equivalence class of $h_\mathcal E$. Therefore, the map $\Phi : Cr_{mod}(\mathfrak n,\mathfrak p)\to H^3(\mathfrak n,\mathfrak p)$ is well-defined.
  \end{proof}

  \begin{rmk}
   %As in %the binary case, the case of the second cohomology group, %we would like to establish 
   We expect an isomorphism
between $Cr_{mod}(\mathfrak n,\mathfrak p)$
and $H^3(\mathfrak n,\mathfrak p)$.  Nevertheless, %when we study this situation we found a fundamental obstacle:
it is not easy to construct a canonical example of crossed extension for a given cohomology class.   %but we 
We thus leave it as a conjecture that a such isomorphism exists.
  \end{rmk}
%%%%%%%%%%%%%%%%%%%%%%%%%%%%%%%%%%%%%%%%%%%%%
%%%%%%%%%%%%%%%%%%%%%%%%%%%%%%%%%%%%%%%%%%%%%


\begin{thebibliography}{99}



\bibitem{Alekseevsky&Guha} D. Alekseevsky and P. Guha, On decomposability of Nambu-Poisson Tensor, Acta
Mathematica Universitatis Comenianae, 1996, 65, 1-9
\bibitem{Ammar&Mabrouk&makhlouf} F. Ammar, S. Mabrouk, A Makhlouf, Constructions of Quadratic  $n $-ary Hom-Nambu Algebras, Algebra, Geometry and Mathematical Physics. Springer, Berlin, Heidelberg, 2014. 201-232.

%\bibitem{ams:ternary} J.Arnlind, A. Makhlouf, S. Silvestrov,
%Ternary Hom-Nambu-Lie algebras induced by Hom-Lie algebras, J. Math. Phys. \textbf{51}, 043515, 11 pp. (2010).
%

\bibitem{Arnlind&Makhlouf&Silvestrov} J.Arnlind, A. Makhlouf, S. Silvestrov,
Construction of $n$-Lie algebras and $n$-ary Hom-Nambu-Lie algebras, J. Math. Phys. \textbf{52}, 123502, 13 pp. (2011).


%\bibitem{akms:ternary} J. Arnlind, A. Kitouni, A. Makhlouf, S. Silvestrov ,
%Structure and Cohomology of $3$-Lie algebras induced by Lie algebras; in Algebra, Geometry and Mathematical Physics, Springer proceedings in Mathematics and $\&$ Statistics, vol \textbf{85} (2014).

%\bibitem{almy:quantnambu} H.Awata, M. Li, D. Minic, T. Yoneya , On the quantization of Nambu brackets,
%J. High Energy Phys. \textbf{2}, Paper 13, 17 pp. (2001)

\bibitem{Bajo&Benayadi&Medina}  I.Bajo, S. Benayadi, A. Medina, Symplectic structures on quadratic Lie
algebras. J. Algebra 316(1):174–188(2007).

\bibitem{Bai&Li} R. P. Bai, Y. Li,   $T^*$-extensions of n-Lie algebras. ISRN Algebra 11(2011).

 \bibitem{Bai&Bai&Wang} R.Bai,  C. Bai, J. Wang,  Realizations of 3-Lie algebras. J. Math. Phys. 51, 063505 (2010).
 
\bibitem{Bai&Song&Zhang} R.Bai, G. Song, Y.  Zhang,   On classification of n-Lie algebras. Front. Math. China 6(4), 581–606
(2011)

\bibitem{Bai&Li2012} R. Bai,  Y. Li, W. Wi, Extensions of $n$-Lie algebras, Scientia Sinica (Mathematica) 7 (2012): 4.
%\bibitem{Bai&Li1}  R. Bai,  Y. Li,  Extensions of n-Hom Lie algebras, Frontiers of Mathematics in China 10.3 (2015): 511-522.



\bibitem{BenHassine&Chtioui&Mabrouk&Silvestrov} A. Ben Hassine, T. Chtioui, S. Mabrouk, S. Silvestrov, Structure and cohomology of 3-Lie Rinehart superalgebras, arXiv:2010.01237(2020).

\bibitem{Hassine&Chtioui&Elhamdadi&Mabrouk} A. Ben Hassine, T. Chtioui, M. Elhamdadi, S. Mabrouk, Cohomology and Deformations of left-symmetric Rinehart Algebras. arXiv:2010.00335(2020).

\bibitem{Bai&Li&Wu} R.Bai, X. Li, Y. Wu,   3-Lie Rinehart Algebras.  arXiv:1903.12283(2019).

\bibitem{Bkouche} R. Bkouche, Structures (K, A)-lin\'eaires, C. R. Acad. Sci. Paris Sér. A-B 262 (1966) A373–A376 (in French).

\bibitem{Bordemann}  M.  Bordemann, Nondegenerate invariant bilinear forms on nonassociative
algebras. Acta Math. Univ. Comenianae LXVI(2):151–201(1997).


\bibitem{Casas} J.M. Casas, Obstructions to Lie Rinehart algebra extensions. Algebra Colloq. 18 (2011), no. 1, 83–104.

\bibitem{Casas&Ladra&Pirashvili} J.M. Casas, M. Ladra, T. Pirashvili. Crossed modules for Lie Rinehart algebras, J. Algebra, 2004, 274(1):192-201.

\bibitem{Casas&Khmaladze&Ladra} J. M. Casas, , E. Khmaladze,  M. Ladra, Crossed modules for Leibniz n-algebras, Forum Math. Vol. 20. No. 5. Walter de Gruyter GmbH, Co. KG, 2008.

\bibitem{Casas&Xabier} J. M.Casas, G.M. Xabier, Abelian extensions and crossed modules of Hom-Lie algebras, J. of Pure and Appl. Alg. 224.3 (2020): 987-1008.

\bibitem{Das} A.Das, Crossed extensions of Lie algebras,  arXiv preprint arXiv:1812.10680 (2018).

\bibitem{Casas&Ladra&Pirashvili1} J.M. Casas, M. Ladra, T. Pirashvili, Triple cohomology of Lie Rinehart algebras and the canonical class  of associative algebras, J. Algebra, 2005, 291(1):144-163.

    \bibitem{Chemla} S. Chemla,  Operations for modules on Lie Rinehart superalgebras. Manuscripta
Math. 87 (1995), no. 2, 199--224.

\bibitem{Chen&Liu&Zhong} Z. Chen, Z. Liu, D. Zhong,  Lie Rinehart bialgebras for crossed products. J. Pure Appl. Algebra 215 (2011), no. 6, 1270–1283.


\bibitem{Dal_Takh} Y. L. Daletskii, L. A. Takhtajan, Leibniz and Lie algebra structures for Nambu algebra, Lett. Math. Phys. \textbf{39}, 127--141 (1997)

%\bibitem{degraaf4solvable} W. A. De Graaf , Classification of solvable Lie algebras, Experiment. Math. \textbf{14}, 15--25 (2005)

\bibitem{Dokas}  I. Dokas, Cohomology of restricted Lie Rinehart algebras. Adv. Math. 231 (5), 2573-2592 (2012)




\bibitem{Figueroa} J. Figueroa-O’Farrill,  Deformations of 3-algebras. J. Math. Phys. 50(11), 113514, 27 pp. (2009)

\bibitem{Filippov:nLie} V. T.Filippov, $n$-Lie algebras, Siberian Math. J. \textbf{26}, 879--891 (1985).

\bibitem{Guo&Zhang&Wang} S. Guo, X. Zhang, S. Wang, S.  On split regular Hom-Leibniz-Rinehart algebras. arXiv preprint arXiv:2002.06017 (2020).

\bibitem{Gautheron} P. Gautheron, Simple facts concerning Nambu algebras, Commun. Math. Phys.
 195,  417-34, (1998).

\bibitem{GM}
J. Grabowski and G. Marmo,
On Filippov algebroids and multiplicative Nambu-Poisson structures.
Differential Geom. Appl. \textbf{12} (2000), no. 1, 35--50.




\bibitem{Herz}  J. Herz, Pseudo-algèbres de Lie. C. R. Acad. Sci. Paris 236 (1953), 1935–1937.
\bibitem{Higgins&Mackenzie} P. J. Higgins, K. Mackenzie, Algebraic constructions in the category of Lie algebroids, J. Algebra (ISSN: 0021-8693) 129 (1) (1990) 194–230.
\bibitem{Huebschmann3} J. Huebschmann, Lie Rinehart Algebras, Descent, and Quantization, in: Galois Theory, Hopf Algebras, and Semiabelian Categories, in: Fields Inst. Commun., vol. 43, Amer. Math. Soc., Providence, RI, 2004, pp. 295–316.
\bibitem{Huebschmann} J. Huebschmann, Poisson cohomology and quantization. J. Reine Angew. Math. 408
(1990), 57–113.
\bibitem{Huebschmann1} J. Huebschmann, Duality for Lie Rinehart algebras and the modular class. J. Reine
Angew. Math., 510 (1999), 103–159.
bibitem{Huebschmann1} J. Huebschmann, Duality for Lie Rinehart algebras and the modular class. J. Reine
Angew. Math., 510 (1999), 103–159.

\bibitem{Ho&Chebotar&Ke} P. Ho, M. Chebotar, W. Ke, On skew-symmetric maps on Lie algebras, Proc. Royal
Soc. Edinburgh, 2003, 133A. 1273-1281

\bibitem{Kasymov}  Sh.M. Kasymov, On a theory of n-Lie algebras (Russian). Algebra i Logika 26(3), 277–297 (1987)


\bibitem{Krahmer&Rovi}  U. Krahmer, A. Rovi,  A Lie Rinehart algebra with no antipode. Comm. Algebra 43
(2015), no. 10, 4049–4053.


\bibitem{Liu&Sheng&Bai&Chen} J. Liu, Y. Sheng, C. Bai and Z. Chen, Left-symmetric algebroids, Math. Nach. (2016) 289, No. 14–15,
1893–1908.

\bibitem{Lin&Wang&Deng} J. Lin, Y. Wang,  S. Q. Deng,  T*-extension of Lie triple systems. Linear Algebra
Appl. 431(11):2071–2083(2009).


\bibitem{Liu&Zhang} W. L. Liu,  Z. X. Zhang,  $T^\ast$-extension of a 3-Lie algebra. Linear and Multilinear
Algebra 60(5):583–594(2012).
\bibitem{Liu&Zhang1} W. L. Liu,   Z. X. Zhang, $T^\ast$-extension of n-Lie algebras. Linear and Multilinear
Algebra 61(4):527–542(2013).
\bibitem{Liu&Sheng&Bai} J. Liu, Y. Sheng and C. Bai, Pre-symplectic algebroids and their applications, Lett. Math. Phys. (2018) 108  (3), 779–804.
\bibitem{Liu&Sheng&Bai1} J. Liu, Y. Sheng and C. Bai, Left-symmetric bialgebroids and their corresponding Manin triples, Diff.
Geom. Appl. 59 (2018.08) 91-111.



\bibitem{Mackenzie1987} K. Mackenzie, Lie groupoids and Lie algebroids in differential geometry. London Mathematical Society
Lecture Note Series 124 (1987), Cambridge University Press, Cambridge.
\bibitem{Mackenzie1995} K. C. H. Mackenzie, Lie algebroids and Lie pseudoalgebras, Bull. London Math. Soc. 27 (2) (1995) 97–147.
\bibitem{Makhlouf2016}  A. Makhlouf,  On deformations of n-Lie algebras, chapter 4 in non associative   non commutative
algebra and operator theory. In: Gueye, C.T., Molina M.S. (eds.) Springer Proceedings in Mathematics
  Statistics, vol. 160 (2016)
\bibitem{Marmo&Vilasi&Vinogradov} G. Marmo, G. Vilasi and A. M. Vinogradov, The local structure of n-Poisson and
n-Jacobi manifolds, J. Geom. Phys., 1998, 25 141-82
\bibitem{Medina&Revoy} A. Medina,  P. Revoy,   Alg\`ebres de Lie et produit scalaire invariant. (French)
Ann. Sci. École Norm. Sup. (4) 18(3):553–561(1985).

\bibitem{Michor&Vinogradov} P. W. Michor and A. M. Vinogradov n-ary and associative algebras, Rend. Sem.
Mat. Univ. Pol. Torino, 1996, 53 373-92.
\bibitem{Mikolaj} R. Mikolaj, Cohomology ring of n-Lie algebras,  Extracta mathematicae 20.3 (2005): 219-232.

\bibitem{Mishra&Mukherjee&Naolekar} S. K. Mishra, G. Mukherjee, A.  Naolekar, Cohomology and deformations of Filippov algebroids, arXiv preprint arXiv:1912.13193(2019).

\bibitem{Nakanishi} N. Nakanishi, On Nambu-Poisson manifolds, Rev. Math. Phys., 1998, 10 499-510

\bibitem{Nambu:GenHD} Y. Nambu, Generalized Hamiltonian dynamics, Phys. Rev. D (3) \textbf{7}, 2405--2412 (1973)




\bibitem{Papadopoulos} G, Papadopoulos, M2-branes, 3-Lie algebras and Plucker relations, J. of High Energy Phy. 2008.05 (2008): 054.






\bibitem{Sheng&Zhu} Y.Sheng, C. Zhu, Higher extensions of Lie algebroids,  Communications in Contemporary Mathematics 19.03 (2017): 1650034.
\bibitem{Sheng2012} Y.Sheng, On deformations of Lie algebroids, Results in Mathematics 62.1-2 (2012): 103-120.



\bibitem{Takhtajan1995} L. Takhtajan, A higher order analog of Chevalley-Eilenberg complex and deformation theory of n-algebras.
St. Petersburg Math. J. 6, 429–438 (1995)



\bibitem{Takhtajan}  L. Takhtajan, On foundation of the generalized Nambu mechanics, Commun. Math. Phys.
160 (1994), pp. 295–315


\bibitem{Takhtajan:cohomology} L. A.Takhtajan, Higher order analog of Chevalley-Eilenberg complex and deformation theory of $n$-algebras, St. Petersburg Math. J. \textbf{6} no. 2, 429--438 (1995)


\end{thebibliography}
\end{document}